\def\acts{\mathrel{\reflectbox{$\righttoleftarrow$}}}
\DeclareMathAlphabet{\mathpzc}{OT1}{pzc}{m}{it}
\newcommand{\Z}{\mathbb{Z}}
\newcommand{\R}{\mathbb{R}}
\newcommand{\C}{\mathbb{C}}
\newcommand{\cp}{\C \mathbf{P}}
\def\CA{\mathcal{A}}
\def\CB{\mathcal{B}}
\def\CE{\mathcal{E}}
\def\CF{\mathcal{F}}
\def\CH{\mathcal{H}}
\def\CN{\mathcal{N}}
\def\CM{\mathcal{M}}
\def\CT{\mathcal{T}}
\def\CW{\mathcal{W}}
\def\HVW{\mathcal{H}_{\mathrm{VW}}}
\def\ZVW{Z_{\mathrm{VW}}}
\def\MVW{\mathcal{M}_{\mathrm{VW}}}
\def\tilde{\widetilde}
\def\bar{\overline}
\def\Tr{\mathrm{Tr}\,}
\DeclareMathOperator{\Hom}{Hom}
\newcommand{\spinc}{\operatorname{Spin}^c}
\newcommand{\grdim}{\operatorname{gr-dim}}
\newcommand{\spin}{\mathfrak{s}}
\newcommand{\be}{\begin{equation}}
\newcommand{\bea}{\begin{eqnarray}}
\newcommand{\eea}{\end{eqnarray}} 
\newcommand{\ee}{\end{equation}}
\newtheorem{theorem}{Theorem}
\newtheorem{proposition}[theorem]{Proposition}
\newtheorem{lemma}[theorem]{Lemma}
\newtheorem{conjecture}[theorem]{Conjecture}
\newtheorem*{namedtheorem}{\theoremname}
\newcommand{\theoremname}{testing}
\theoremstyle{definition}
\newtheorem{definition}[theorem]{Definition}
\newtheorem{remark}[theorem]{Remark}
\title[3-manifolds and Vafa-Witten theory]{3-manifolds and Vafa-Witten theory}
\date{July 12, 2022}
\author[S. Gukov]{Sergei Gukov}
\address{California Institute of Technology, Pasadena, CA, 91125.\newline \it{E-mail address:} \tt{gukov@theory.caltech.edu}}
\thanks{S. G.~was supported by the National Science Foundation under Grant No.~NSF DMS 1664227 and by the U.S.~Department of Energy, Office of Science, Office of High Energy Physics, under Award No.~DE-SC0011632.}
\author[A. Sheshmani]{Artan Sheshmani}
\address{Harvard University, Jefferson Laboratory, 17 Oxford St, Cambridge, MA 02138.\newline \it{E-mail address:} \tt{artan@mit.edu}}
\thanks{Research of A. S. was partially supported by the NSF DMS-1607871, NSF DMS-1306313, the Simons 38558, and HSE University Basic Research Program. A.S. would like to further sincerely thank the Center for Mathematical Sciences and Applications at Harvard University, and Harvard University Physics department, IMSA University of Miami, as well as the Laboratory of Mirror Symmetry in Higher School of Economics, Russian federation, for the great help and support.}
\author[S.-T. Yau]{Shing-Tung Yau}
\address{Yau Mathematical Science Center, Tsinghua University, Beijing, 10084, China.\newline \it{E-mail address:} \tt{yau@math.harvard.edu}}
\thanks{The work of S.-T. Y. was partially supported by the Simons Foundation grant 38558.}
\numberwithin{equation}{section}
\begin{document}

\maketitle

\begin{abstract}
We initiate explicit computations of Vafa-Witten invariants of 3-manifolds, analogous to Floer groups in the context of Donaldson theory. In particular, we explicitly compute the Vafa-Witten invariants of 3-manifolds in a family of concrete examples relevant to various surgery operations (the Gluck twist, knot surgeries, log-transforms). We also describe the structural properties that are expected to hold for general 3-manifolds, including the modular group action, relation to Floer homology, infinite-dimensionality for an arbitrary 3-manifold, and the absence of instantons.
\end{abstract}

\tableofcontents

\section{Introduction}

The main goal of this paper is to compute and study invariants of 3-manifolds in Vafa-Witten theory \cite{Vafa:1994tf}, which is a particular generalization of the Donaldson gauge theory \cite{MR710056}. The latter involves the study of moduli spaces of solutions to the anti-self-duality equations
\be
F_A^+ \; = \; 0
\label{ASDeqs}
\ee
for the gauge connection $A$ over a 4-manifold $M_4$. When the 4-manifold is of the form (illustrated in Figure~\ref{fig:Hilbspace})
\be
M_4 \; = \; \R \times M_3
\label{M4RM3}
\ee
one can construct an infinite-dimensional version of the Morse theory on the space of gauge connections on $M_3$, called the instanton Floer homology \cite{MR956166}.
\parpic[r]{
	\begin{minipage}{50mm}
		\centering
		\includegraphics[scale=0.4]{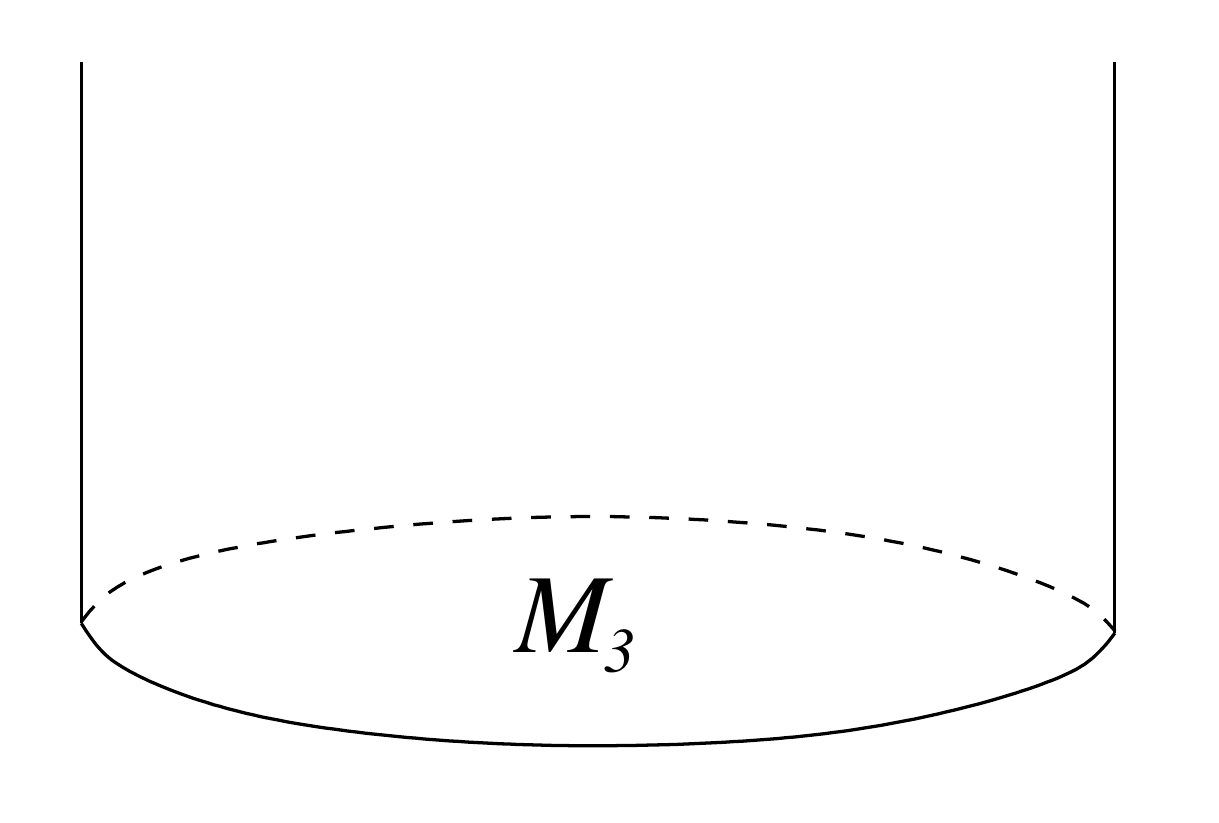}
		\captionof{figure}{The setup of a Floer theory. In physics, it represents the space of states of a 4-dimensional topological gauge theory on~$M_3$.}
		\label{fig:Hilbspace}
	\end{minipage}%
}
In particular, the Floer homology is a homology of a chain complex generated by $\R$-invariant (``stationary'') solutions to the PDEs \eqref{ASDeqs}, with the differential that comes from non-trivial $\R$-dependent ``instanton'' solutions on $\R \times M_3$. For introduction to Floer theory we recommend an excellent book \cite{MR1883043}. From the physics perspective, instanton Floer homology of a 3-manifold $M_3$ is the space of states in a Hamiltonian quantization of the topological Yang-Mills theory on $\R \times M_3$ \cite{Witten:1988ze}.

Since then, many variants of Floer homology have been studied, most notably the monopole Floer homology \cite{MR2388043} based on Seiberg-Witten monopole equations:
\begin{eqnarray}
F_A^+ & = & \Psi \otimes \bar \Psi - \frac{1}{2} (\bar \Psi \Psi){\rm Id}, \label{SWeq} \\
D \!\!\!\! \slash \, \Psi & = & 0 \nonumber
\end{eqnarray}
where, in addition to the $U(1)$ gauge connection $A$, the configuration space (the ``space of fields'') includes a section $\Psi \in \Gamma (M_4, W^+)$ of a complex spinor bundle $W^+$. The monopole Floer homology $HM (M_3, \spin)$ depends on a choice of additional data, namely the spin$^c$ structure $\spin \in \spinc (M_3)$, and is equivalent to the Heegaard Floer homology $HF (M_3, \spin)$ \cite{MR2102999} and to the embedded contact homology \cite{MR2827830,MR2532999}. In fact, technical details in each of these theories lead to four different variants, which correspondingly match.
\parpic[l]{
	\begin{minipage}{50mm}
		\centering
		\includegraphics[scale=0.4]{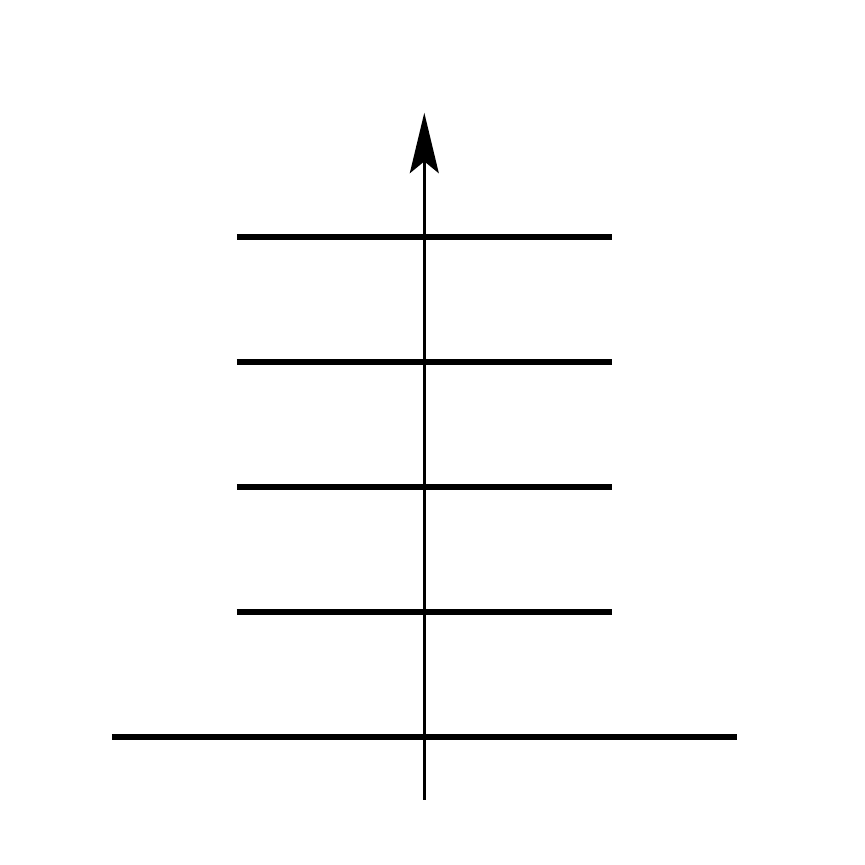}
		\captionof{figure}{The space $\CT^+$ is isomorphic to the Hilbert space of a quantum harmonic oscillator.}
		\label{fig:spectrum}
	\end{minipage}%
}
The variant that will be most relevant to us in what follows is the so-called ``to'' version of the monopole Floer homology, $\widecheck{HM} (M_3, \spin)$, and the corresponding ``plus'' version of the Heegaard Floer homology, denoted $HF^+ (M_3, \spin)$. The equivalence between these Floer theories will be useful to us because the monopole Floer homology will be conceptually closer to its analogue in Vafa-Witten theory, whereas concrete calculations are usually simpler in the Heegaard Floer homology. In particular, before we turn to PDEs in Vafa-Witten theory, let us briefly mention a few concrete results in the Heegaard Floer homology which, on the one hand, will serve as a prototype in our study of Vafa-Witten invariants of 3-manifolds and, on the other hand, illustrate the general structure of Floer homology in Yang-Mills theory \eqref{ASDeqs} and in Seiberg-Witten theory \eqref{SWeq}.

\begin{theorem}[based on {\cite[Theorem 9.3]{MR2065507}}]\label{thm:HFbasic}
Let $L(p,1)$ denote a Lens space and $\Sigma_g$ be a closed oriented surface of genus $g$. Then,
\begin{subequations}\label{HFbasic}
\begin{eqnarray}
HF^+ (L(p,1), \spin) & = & \CT^+_0, \qquad\qquad \forall \spin \\
HF^+ (S^2 \times S^1, \spin) & = & \CT^+_{-1/2} \oplus \CT^+_{1/2}, \qquad \spin = \spin_0 \\
HF^+ (\Sigma_g \times S^1, \spin_h) & = & \bigoplus_{i=0}^d \Lambda^i H^1 (\Sigma_g ; \Z) \otimes \CT^+_0 / (U^{i-d-1}), \qquad h \ne 0
\end{eqnarray}
\end{subequations}
where $d = g-1-|h|$ and $\spin_h$ is the spin$^c$ structure with $c_1 (\spin_h) = 2h [S^1]$.
\end{theorem}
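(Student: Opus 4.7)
The plan is to establish (a)--(c) by the standard Heegaard Floer procedure: compute $\HFhat(Y,\spin)$ from an explicit Heegaard diagram, identify $\mathit{HF}^\infty(Y,\spin)$ from topological data, and then extract $\HFplus(Y,\spin)$ from the long exact sequence
\[
\cdots \to \HFminus(Y,\spin) \to \mathit{HF}^\infty(Y,\spin) \to \HFplus(Y,\spin) \to \HFminus(Y,\spin)[1] \to \cdots
\]
together with the $\Z[U]$-module structure and the absolute $\Q$-grading.

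For (a), the standard genus-one Heegaard splitting of $L(p,1)$ yields two attaching curves on the torus meeting in exactly $p$ points, one per $\spinc$-structure. No positive-area holomorphic disks connect them, so $\HFhat(L(p,1),\spin) \cong \Z$ for every $\spin$. Since $L(p,1)$ is a rational homology sphere, $\mathit{HF}^\infty \cong \Z[U,U^{-1}]$ in each $\spin$, and the long exact sequence forces $\HFplus \cong \CT^+_0$, with the grading shift pinned down by the standard lens-space $d$-invariant formula. For (b), the genus-one Heegaard diagram for $S^2 \times S^1$ with two isotopic meridians (perturbed for weak admissibility) has two intersection points $x_\pm$ in the torsion $\spinc$-structure $\spin_0$ with no positive disk between them, giving $\HFhat(S^2\times S^1,\spin_0) = \Z\langle x_+\rangle \oplus \Z\langle x_-\rangle$ in gradings $\pm 1/2$. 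With $\mathit{HF}^\infty \cong \Lambda^* H^1(S^2\times S^1;\Z) \otimes \Z[U,U^{-1}]$ of rank two per grading, the long exact sequence yields $\CT^+_{-1/2} \oplus \CT^+_{1/2}$.

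Part (c) requires genuine work, and I would argue by induction on $|h|$. First, the Ozsv\'ath--Szab\'o adjunction inequality $|\langle c_1(\spin), [\Sigma_g]\rangle| \le 2g-2$ for non-vanishing $\HFplus$ gives $\HFplus(\Sigma_g \times S^1, \spin_h) = 0$ whenever $|h| \ge g$, matching the empty direct sum ($d<0$). For the extreme non-vanishing case $|h| = g-1$ (so $d=0$), I would present $\Sigma_g \times S^1$ as $0$-surgery on the genus-$g$ Borromean knot $B_g \subset \#^{2g}(S^2 \times S^1)$ and apply the Ozsv\'ath--Szab\'o large-surgery formula together with the knot Floer complex of $B_g$ to read off a single tower of length one. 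The intermediate cases $0 \le |h| < g-1$ then follow by descending induction using the surgery exact triangle relating consecutive $\spinc$-structures, coupled with a careful analysis of how the geometric $H_1/\mathrm{Tors} \cong H^1(\Sigma_g;\Z) \oplus \Z$ action on $\HFplus$ progressively lengthens the tower and populates the wedge factor $\Lambda^i H^1(\Sigma_g;\Z)$.

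The principal obstacle is exactly this induction in (c): one must simultaneously track (i) the $\Lambda^* H^1(\Sigma_g;\Z)$-module structure, identifying each wedge-level generator with an explicit closed curve on $\Sigma_g$ acting geometrically on $\HFplus$, and (ii) the precise tower-truncation length $\CT^+_0/(U^{i-d-1})$ arising from iterated applications of the surgery triangle. Pinning down the absolute $\Q$-grading of the bottom of each summand is the final technical step, and together with the bookkeeping above constitutes the bulk of the work in \cite[\S 9]{MR2065507}.
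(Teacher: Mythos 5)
The paper itself does not prove this theorem: it is imported verbatim (``based on'') from Ozsv\'ath--Szab\'o \cite{MR2065507}, so there is no internal argument to compare against. Judged on its own terms, your outline for (a) and (b) is the standard and essentially correct route -- genus-one diagrams, $\mathit{HF}^\infty$ from the topology, and the long exact sequence -- and it is also the route taken in the literature. Two small caveats: in (a) the absolute gradings of the towers are the correction terms $d(L(p,1),\spin)$, which are \emph{not} all zero, so the subscript $0$ in the stated theorem has to be read loosely (your own appeal to ``the lens-space $d$-invariant formula'' would in fact produce $\spin$-dependent shifts); and in (b) the rank of $\mathit{HF}^\infty$ is two over $\Z[U,U^{-1}]$, i.e.\ one per grading in each of the two towers, not ``two per grading.''

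For (c) your central idea -- realize $\Sigma_g\times S^1$ as $0$-surgery on the connected sum of $g$ Borromean knots in $\#^{2g}(S^2\times S^1)$ and feed its knot Floer complex into a surgery formula -- is exactly the mechanism of \cite[Thm.~9.3]{MR2065507}. But the inductive scaffolding you build around it has a genuine gap: there is no ``surgery exact triangle relating consecutive $\spinc$-structures'' on a fixed $3$-manifold; the exact triangle relates different surgery coefficients, and it will not walk you from $|h|=g-1$ down to $|h|=1$. The way the argument actually closes is that the $0$-surgery formula for \emph{nontorsion} $\spinc$-structures (Theorem 4.4 of the same reference) computes $\HFplus(Y_0(K),\spin_h)$ for every $h\neq 0$ simultaneously as the homology of an explicit quotient of the filtered knot complex; since $\widehat{\mathit{HFK}}$ of the Borromean knot sum is $\Lambda^* H^1(\Sigma_g;\Z)$ with its symplectic grading, both the wedge factors and the truncation lengths $\CT^+_0/(U^{i-d-1})$ drop out in one step, with no induction on $|h|$ and no separate tracking of the $H_1$-action. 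I would replace your inductive step with that uniform computation; as written, the intermediate cases $0<|h|<g-1$ are not actually reached by your argument.
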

This list of basic but important calculations in the Heegaard Floer homology illustrates well a key ingredient that plays a central in this theory, namely the space
\be
\CT^+ = \C [U, U^{-1}] / U \cdot \C [U] \cong H^*_{U(1)} (\text{pt}) = H^* (\cp^{\infty}) \cong \C [u]
\label{Ttower}
\ee
From the physics perspective \cite{Gukov:2016gkn}, this space can be understood as the Fock space of a single boson, $\CT^+ \cong \text{Sym}^* (\phi)$, illustrated in Figure~\ref{fig:spectrum}. When the minimal degree (of the ``ground state'') is equal to $n$ we write $\CT^+_n$. It is often convenient to work with the corresponding Poincar\'e polynomial, $\frac{t^n}{1 - t^2}$, where we introduced a new variable $t$ and took into consideration that in standard conventions $U^{-1}$ carries homological degree $2$. The same variable $t$, with the same meaning, will be used in the context of Vafa-Witten theory, to which we turn next.

Our main goal is to set up a concrete framework that allows to compute analogous invariants of 3-manifolds in Vafa-Witten theory, where the relevant PDEs generalize \eqref{ASDeqs} and \eqref{SWeq}:
\be
\begin{aligned}
	F_A^+ - \frac{1}{2} [B \times B] + [C,B] & = 0 \\
	d_A^* B - d_A C & = 0
\end{aligned}
\qquad \text{where} \qquad
\begin{aligned}
	A & \in \CA_P \\
	B & \in \Omega^{2,+} (M_4; \text{ad}_P) \\
	C & \in \Omega^{0} (M_4; \text{ad}_P)
\end{aligned}
\label{VWeqs}
\ee
As explained in the main text, these equations have a number of parallels and relations to \eqref{ASDeqs} and \eqref{SWeq}. This will be the basis for various structural properties of the Floer homology groups in Vafa-Witten theory which, following \cite{Gukov:2016gkn}, we denote by $\HVW (M_3)$, {\it cf.} \eqref{HVWviaQ}. In particular, reflecting the fact that the configuration space in \eqref{VWeqs} is much larger than in \eqref{ASDeqs} and \eqref{SWeq}, we will see that $\HVW (M_3)$ is also much larger, in particular, compared to $HF^+ (M_3)$. Also, many challenges that one encounters in constructing 3-manifold invariants based on \eqref{ASDeqs} and \eqref{SWeq} will show up in the Vafa-Witten theory as well.

Our main result is a concrete framework that allows computation of $\HVW (M_3)$ for many simple 3-manifolds. In particular, we produce a suitable analogue of Theorem~\ref{thm:HFbasic} in Vafa-Witten theory which, due to a large size of $\HVW (M_3)$, we state here only at the level of the Poincar\'e series, relegating the full description of $\HVW (M_3)$ to the main text.
\begin{proposition}\label{thm:VWbasic}
For $G=SU(2)$:
\begin{subequations}\label{VWbasic}
\begin{eqnarray}
\grdim \HVW (S^3) & = & \frac{1}{1-t^2} \\
\grdim \HVW (S^2 \times S^1) & = & \frac{2 t^{3/2} \left(t x^4+1\right)}{\left(1- t^2\right) \left(1-t^2 x^4\right)} \\
\grdim \HVW (\Sigma_g \times S^1) & = &
\sum_{\lambda=0}^9 S_{0 \lambda}^{2-2g}
\end{eqnarray}
\end{subequations}
where $t$ has the same meaning as in \eqref{HFbasic} and the explicit values of $S_{0 \lambda}$ are summarized in \eqref{Selements}.
\end{proposition}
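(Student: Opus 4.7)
My plan is to identify $\HVW(M_3)$ in each case with the BPS Hilbert space obtained from $\mathbb{R}$-invariant solutions of the Vafa--Witten equations \eqref{VWeqs} on $\mathbb{R}\times M_3$, organized by the equivariant cohomology of the resulting moduli space. In complete analogy with the Heegaard Floer situation of Theorem \ref{thm:HFbasic}, each connected component of the moduli space of stationary solutions should contribute a copy of the bosonic tower $\CT^+$ of \eqref{Ttower}, up to a grading shift. The computation then reduces to (i) enumerating these components with their grading shifts, and (ii) correctly accounting for the extra Higgs-type fields $B,C$ that are absent from the monopole or instanton setups.

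For $M_3=S^3$ this enumeration is immediate: Hodge theory on $\mathbb{R}\times S^3$ forces any $\mathbb{R}$-invariant VW solution to have $B=0$ and $C=0$, while the only $SU(2)$-flat connection on $S^3$ is the trivial one. The moduli space is a single point, so $\grdim\HVW(S^3)=\frac{1}{1-t^2}$. For $M_3=S^2\times S^1$, the flat-connection moduli reduce to the $SU(2)$-holonomy around the $S^1$, but now the $B,C$ fields activate a Higgs-type tower indexed by monopole charge on $S^2$. The generating function of that tower should produce the factor $\frac{1}{1-t^2 x^4}$, with $x$ the fugacity for the $U(1)$ symmetry rotating the BPS charges, while the numerator $2t^{3/2}(tx^4+1)$ arises from two Weyl-related boundary contributions and from the half-integer grading shift of the ground states.

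For $M_3=\Sigma_g\times S^1$ the strategy is to compactify Vafa--Witten theory along the $S^1$ factor, producing an effective three-dimensional TQFT whose Hilbert space on $\Sigma_g$ coincides with $\HVW(\Sigma_g\times S^1)$. Assuming this effective 3d TQFT is rational, its partition function on $\Sigma_g$ is given by the Verlinde/Atiyah formula $\sum_\lambda S_{0\lambda}^{2-2g}$, where $\lambda$ runs over the primary labels and $S$ is the modular $S$-matrix. The specific count of $10$ primaries for $G=SU(2)$ and the explicit matrix elements recorded in \eqref{Selements} then come from identifying the effective 3d TQFT and its modular data explicitly.

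The main obstacle is step (ii) together with the rationality claim for $\Sigma_g\times S^1$: one must show that the $\mathbb{R}$-invariant VW solutions are not merely flat $SL(2,\mathbb{C})$-connections, and that the extra moduli coming from $B,C$ organize into a rational 3d TQFT with precisely the $10$ superselection sectors of \eqref{Selements}. This Higgs-branch contribution is also what controls the size gap between $\HVW$ and $HF^+$ emphasized in the introduction, and it is what pins down the non-trivial factors $t^{3/2}$, $x^4$, and the specific $S$-matrix entries that appear in \eqref{VWbasic}.
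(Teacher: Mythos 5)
Your overall shape for part (c) --- reduce on the circle directions and invoke a Verlinde-type formula $\sum_\lambda S_{0\lambda}^{2-2g}$ for a semisimple effective theory --- is indeed the strategy of section~\ref{sec:eqVerlinde} (there phrased as a 2d TQFT on $\Sigma_g$, the equivariant Verlinde formula for $\CE$-valued Higgs bundles, rather than a rational 3d TQFT). But the proposal defers exactly the content that constitutes the proof: the identification of the ten labels $\lambda$ and the matrix elements $S_{0\lambda}$. In the paper these are not ``modular data of some rational TQFT to be identified''; they are computed from the Bethe ansatz equation \eqref{Betheeq}, whose three factors \eqref{BetheHiggs} come from the three adjoint Higgs fields with $R_i=(2,0,0)$ in \eqref{EVW}, together with the effective-dilaton and $\partial^2\tilde\CW/(\partial\log z)^2$ contributions that give \eqref{Sxyt} and, at $x=y$, \eqref{Selements}. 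The count of $10$ admissible solutions and the specific entries are outputs of that computation, not inputs. Likewise, (\ref{VWbasic}b) is not obtained by a separate monopole-charge argument on $S^2$: it is simply the $g=0$ evaluation of \eqref{ZviaSmatrix}, i.e.\ $\sum_\lambda S_{0\lambda}^2$, which yields \eqref{S2S1xyt} and collapses to the stated form at $x=y$. Your reading of $x$ as a fugacity for ``monopole charge'' is off; $x$ (after setting $x=y$) is the equivariant parameter for the $U(1)_x\times U(1)_y\subset Spin(4)_R$ symmetry rotating the $R=0$ Higgs fields, and the numerator/denominator structure falls out of the Bethe-vacuum sum rather than from ``Weyl-related boundary contributions.''

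There is also a genuine error in your organizing principle: the claim that \emph{each} connected component of the stationary moduli space contributes a copy of $\CT^+$ up to a shift. The paper's own discussion (see the treatment of $\Sigma(2,3,7)$ and \eqref{IPoincare}) makes the opposite point: isolated \emph{irreducible} flat $SL(2,\C)$ connections are expected to contribute finite-dimensional pieces, and the tower $\CT^+$ is tied to the reducible/central stratum --- in the paper's actual proof of (\ref{VWbasic}a) it arises as the polynomial ring $\C[u]$, $u=\Tr\phi^2$, of gauge-invariant local operators in Proposition~\ref{prop:HS3} (via the state-operator correspondence for $S^3$), not from a count of components. Your Hodge-theoretic reduction to the trivial connection on $S^3$ is fine as far as it goes, but without the correct rule for what a given stratum contributes, it does not by itself yield $\frac{1}{1-t^2}$, and the same missing rule is what prevents your sketch for $S^2\times S^1$ from being a derivation.
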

We present two approaches to these results, as part of a more general framework for computing $\HVW (M_3)$. First, in section~\ref{sec:eqVerlinde} we carefully analyze gradings on $\HVW (M_3)$ for general 3-manifolds as well as for circle bundles over $\Sigma_g$. In the latter case, the relevant moduli spaces turn out to be closely related to the moduli spaces of complex $G_{\C}$ connections on $\Sigma_g$, whose non-compactness can be compensated by additional symmetries, as in the equivariant Verlinde formula \cite{MR3670728}. Then, in section~\ref{sec:Qcohomology} we reproduce the same results by a direct computation of $Q$-cohomology (a.k.a. the BRST cohomology) in the Vafa-Witten theory.

Apart from analyzing the main statement of Proposition~\ref{thm:VWbasic} via several methods, in this paper we also present evidence for a number of conjectures --- Conjectures~\ref{conj:infdimnl}, \ref{conj:noinstantons}, and \ref{conj:RGflows} --- all of which are concrete falsifiable statements. Part of our motivation is that future efforts to either prove or disprove these conjectures can lead to better understanding of the Vafa-Witten theory on 3-manifolds.

Another motivation for this work is to develop surgery formulae in Vafa-Witten theory, analogous to those in Yang-Mills theory \cite{MR1171893,MR1362829} and in Seiberg-Witten theory \cite{MR1492130,MR1650308,MR2299739}. Initial steps in this direction were made in \cite{MR4047476} where simple instances of cutting and gluing along 3-manifolds were considered in the context of Vafa-Witten theory. One of our goals here is to generalize these recent developments and to bring them closer to the above-mentioned surgery formulae by studying Vafa-Witten invariants of 3-manifolds. In the long run, this could be a strategy for computing Vafa-Witten invariants on general 4-manifolds, beyond the well studied class of K\"ahler surfaces.

Finally, aiming to make this paper accessible to both communities, we tried to not overload it with mathematics or physics jargon. Hopefully, we managed to strike the right balance.

\subsection*{Acknowledgements}
The authors would like to thank Tobias Ekholm, Po-Shen Hsin, Martijn Kool, Nikita Nekrasov, Du Pei, and Yuuji Tanaka for useful discussions during recent years related to various aspects of this work.

\section{Predictions from MTC$[M_3]$ and the equivariant Verlinde formula}
\label{sec:eqVerlinde}

We start by summarizing the symmetries of Vafa-Witten theory and demonstrating how these symmetries can help to deal with non-compact moduli spaces.

\subsection{Symmetries and gradings}

The holonomy group of a general 4-manifold $M_4$ is
\be
SO(4)_E \; \cong \; SU(2)_{\ell} \times SU(2)_r
\ee
where we use the subscript ``$E$'' to distinguish it from other symmetries, which will enter the stage shortly. When $M_4$ is of the form \eqref{M4RM3}, the holonomy is reduced to
\be
SU(2)_E \; = \; \text{diag} [ SU(2)_{\ell} \times SU(2)_r ]
\label{3dholonomy}
\ee
And, when $M_3 = \Sigma_g \times S^1$, it is further reduced to $SO(2)_E \cong U(1)_E \subset SU(2)_E$.

In addition, 4d $\CN=4$ super-Yang-Mills theory has ``internal'' $R$-symmetry $SO(6)_R$. In the process of a topological twist, required to define the theory on a general 4-manifold \cite{Vafa:1994tf}, this symmetry is broken to a subgroup $SU(2)_R \subset SO(6)_R$. All fields and states in the topological theory form representations under this group. Its Cartan subgroup, which we denote by $U(1)_t \subset SU(2)_R$, is familiar from the Donaldson-Floer theory and also plays an important role in this paper. In particular, it provides a grading to the Floer homology groups, which are $\Z$-graded in the Vafa-Witten theory.\footnote{In the Donaldson-Floer theory, it is reduced to a $\Z / 8\Z$ grading, which is a manifestation of an anomaly in the physical super-Yang-Mills theory. There is no such anomaly in the Vafa-Witten theory.} We use the variable $t$ to write the corresponding generating series of their graded dimensions:
\begin{definition}
\be
\grdim \HVW (M_3) \; := \; \sum_n t^n \dim \HVW^n (M_3)
\label{grdimdef}
\ee
\end{definition}

When 4-manifold $M_4$ is not generic, {\it i.e.} has reduced holonomy, a smaller subgroup of the original $R$-symmetry $SO(6)_R$ is used in the topological twist and, as a result, a larger part of this symmetry may remain unbroken. Specifically, on a 3-manifold Vafa-Witten theory has $SU(2)_R \times \overline{SU(2)}_R$ symmetry, and on a 2-manifold this internal symmetry is enhanced further to $Spin(4)_R \times U(1)_R$.

All these cases are summarized in Table~\ref{tab:symmetries} where, following the notations of \cite{Dijkgraaf:1996tz,Blau:1996bx}, we also list the number of unbroken supercharges, $N_T$.
These numbers are easy to see by noting that the supercharges in Vafa-Witten theory transform as $({\bf 2}, {\bf 2}, {\bf 2}) \oplus ({\bf 1}, {\bf 3}, {\bf 2}) \oplus ({\bf 1}, {\bf 1}, {\bf 2})$ under $SU(2)_{\ell} \times SU(2)_r \times SU(2)_R$ \cite{Vafa:1994tf}. Then, using the second column in Table~\ref{tab:symmetries} and counting the number of singlets under the holonomy group gives $N_T$ in each case.

\begin{table}[ht]
\begin{centering}
\begin{tabular}{|c|c|c|c|}
\hline
~$\phantom{\int^{\int^\int}} ~M_4~ \phantom{\int_{\int}}$~ & ~~holonomy~~ & ~~$R$-symmetry~~ & ~~~$N_T$~~~
\tabularnewline
\hline
\hline
$\phantom{\int^{\int^\int}} \text{generic} \phantom{\int_{\int}}$ & $SU(2)_{\ell} \times SU(2)_r$ & $SU(2)_R$ & $2$
\tabularnewline
\hline
$\phantom{\int^{\int^\int}} \R \times M_3 \phantom{\int_{\int}}$ & $\text{diag} [ SU(2)_{\ell} \times SU(2)_r ]$ & $SU(2)_R \times \overline{SU(2)}_R$ & $4$
\tabularnewline
\hline
$\phantom{\int^{\int^\int}} \R \times S^1 \times \Sigma_g \phantom{\int_{\int}}$ & $SO(2)_E$ & $Spin(4)_R \times U(1)_R$ & $8$
\tabularnewline
\hline
\end{tabular}
\par\end{centering}
\caption{\label{tab:symmetries} Symmetries of the Vafa-Witten theory on different manifolds.}
\end{table}

The symmetry $SU(2)_{\ell} \times SU(2)_r \times SU(2)_R$ of the Vafa-Witten theory is also useful for keeping track of various fields that appear in PDEs and lead to moduli spaces of solutions. Apart from the gauge connection, all ordinary bosonic (as opposed to Grassmann odd) variables originate from scalar fields of the 4d $\CN=4$ super-Yang-Mills theory. After the topological twist they transform as
\be
\underbrace{({\bf 2}, {\bf 2}, {\bf 1})}_A \oplus \underbrace{({\bf 1}, {\bf 3}, {\bf 1})}_B \oplus \underbrace{({\bf 1}, {\bf 1}, {\bf 3})}_{C, \phi, \bar \phi}
\label{VWbosfields}
\ee
under $SU(2)_{\ell} \times SU(2)_r \times SU(2)_R$. In particular, we see that, apart from the fields $A$, $B$, and $C$ that already appeared in the PDEs \eqref{VWeqs}, the full theory also contains fields $\phi$ and $\bar \phi$ that sometimes vanish and, therefore, can be ignored on closed 4-manifolds with a special metric, but will play an important role below, in the computation of the Floer homology groups. Using \eqref{3dholonomy} we also see that, upon reduction to three dimensions, the Vafa-Witten theory contains a gauge connection and fields that transform as $({\bf 1}, {\bf 1}) \oplus ({\bf 3}, {\bf 1}) \oplus ({\bf 1}, {\bf 3})$ under $SU(2)_E \times SU(2)_R$, reproducing one of the results in \cite{Blau:1996bx}. In other words, the theory contains two 1-forms, which naturally combine into a complexified gauge connection, so that the space of solutions on a 3-manifold contains the space of complex flat connections, $\CM_{\text{flat}} (M_3, G_{\C})$.
This also holds true for another twist of 4d $\CN=4$ super-Yang-Mills \cite{Marcus:1995mq}, which under reduction to three dimensions gives the same theory.

If we continue this process further, and reduce the Vafa-Witten theory to a two-dimensional theory on $\Sigma$, from the above it follows that the resulting theory contains two 1-form fields and four complex 0-form Higgs fields, all in the adjoint representation of the gauge group $G$. These fields comprise a $\tilde \CE$-valued $G$-Higgs bundles on $\Sigma$,
\be
\tilde \CE \; = \; L_1 \oplus L_2 \oplus L_3 \oplus L_4
\,, \qquad L_i = K^{R_i/2} \qquad (R_i \in \Z)
\label{ESigma}
\ee
with $R_i = (2,0,0,0)$. As in the classical work of Hitchin \cite{MR887284}, one of the Higgs fields here (say, the one associated with $L_4$) comes from dimensional reduction of 4d gauge connection to two dimensions. Therefore, the Vafa-Witten theory on $M_3 = S^1 \times \Sigma$ can be viewed as a four-dimensional lift of the $\tilde \CE$-valued $G$-Higgs bundles on $\Sigma$, without the last term in \eqref{ESigma}:
\be
\CE \; = \; L_1 \oplus L_2 \oplus L_3
\,, \qquad L_i = K^{R_i/2} \qquad (R_i \in \Z)
\label{EVW}
\ee
where $R_i = (2,0,0)$. Put differently, on $M_3 = S^1 \times \Sigma$ the fields \eqref{VWbosfields} in four-dimensional Vafa-Witten theory consist of a gauge connection and three copies of the Higgs field. Below, we shall refer to this collection of fields as the $\CE$-valued $G$-Higgs bundles on $\Sigma$ and will be interested in the computation of its K-theoretic (three-dimensional) and elliptic (four-dimensional) equivariant character.\footnote{Note, all fields in the Vafa-Witten theory are valued in the adjoint representation of the gauge group $G$, {\it cf.} \eqref{VWeqs}. And, the terminology is such that ``$\CE$-valued $G$-Higgs bundles on $\Sigma$'' in the present context describe a version of the Hitchin equations on $\Sigma$, with Higgs fields valued in $L_1 \otimes \text{ad}_P \oplus L_2 \otimes \text{ad}_P \oplus L_3 \otimes \text{ad}_P$.}

\begin{remark} \label{rem:trace} By definition, the generating series of graded dimensions \eqref{grdimdef} is the Vafa-Witten invariant on $M_4 = S^1 \times M_3$ with a holonomy for $U(1)_t$ symmetry along the $S^1$. In other words, the variable $t$ is the holonomy of a background $U(1)_t$ connection on the $S^1$. (Hopefully, this also clarifies our choice of notations.) Note, in a $d$-dimensional TQFT that satisfies Atiyah's axioms, the relation $Z (S^1 \times M_{d-1}) = \dim \CH (M_{d-1})$ is simply one of the axioms. Although the Vafa-Witten theory is not a TQFT in this sense --- in part, because $\HVW (M_3)$ is infinite-dimensional for any $M_3$ --- a version of the relation $Z (S^1 \times M_3) = \dim \CH (M_3)$ still holds if instead of the ordinary dimension we use the graded dimension \eqref{grdimdef}. And, it is also useful to compare this modification of the Vafa-Witten invariant of $S^1 \times M_3$, with the holonomy $t$, to the ordinary Vafa-Witten invariant of $M_4 = S^1 \times M_3$. Since this 4-manifold is K\"ahler for many $M_3$, we can use the calculation of Vafa-Witten invariants on K\"ahler surfces \cite{Vafa:1994tf,Dijkgraaf:1997ce} (see also \cite{MR4158461,MR4093873,MR4084159,Manschot:2021qqe} for recent work and mathematical proofs):
\begin{multline}
\ZVW (M_4) =
\sum_{\genfrac{}{}{0pt}{2}{x: \mathrm{basic}}{\mathrm{classes}}}
\mathrm{SW}_{M_4} (x) \Big[
(-1)^{\frac{\chi+\sigma}{4}} \delta_{v,[x']}
\left( \frac{G(q^2)}{4} \right)^{\frac{\chi+\sigma}{8}}
\left( \frac{\theta_0}{\eta^2} \right)^{-2\chi - 3 \sigma}
\left( \frac{\theta_1}{\theta_0} \right)^{-x' \cdot x'}
\\
+  2^{1 - b_1} (-1)^{[x'] \cdot v}
\left( \frac{G(q^{1/2})}{4} \right)^{\frac{\chi+\sigma}{8}}
\left( \frac{\theta_0 + \theta_1}{2\eta^2} \right)^{-2\chi - 3 \sigma}
\left( \frac{\theta_0 - \theta_1}{\theta_0 + \theta_1} \right)^{-x' \cdot x'}
\\
+  2^{1 - b_1} i^{-v^2} (-1)^{[x'] \cdot v}
\left( \frac{G(-q^{1/2})}{4} \right)^{\frac{\chi+\sigma}{8}}
\left( \frac{\theta_0 - i \theta_1}{2\eta^2} \right)^{-2\chi - 3 \sigma}
\left( \frac{\theta_0 + i \theta_1}{\theta_0 - i \theta_1} \right)^{-x' \cdot x'} \Big]
\label{VWKahler}
\end{multline}
which allows to express the result in terms of the Seiberg-Witten invariants $\mathrm{SW}_{M_4}$ and basic topological invariants, such as the Euler characteristic $\chi$ and the signature $\sigma$. Because $\chi = 0 = \sigma$ for $M_4 = S^1 \times M_3$, we quickly conclude that the ordinary Vafa-Witten invariant (without holonomy along the $S^1$) is simply a number, independent of $q$ or other variables. The calculation of \eqref{VWbasic} is the analogue of \eqref{VWKahler} in the presence of holonomies along the $S^1$. In particular, much like \eqref{VWKahler}, it has no $q$-dependence and enjoys a relation to the Seiberg-Witten theory (more precisely, $HF^+ (M_3)$, {\it cf.} \eqref{HFbasic}) as we explain below.
\end{remark}

\begin{remark} \label{rem:unrefined}
On K\"ahler manifolds, the derivatiion of \eqref{VWKahler} deals with one of the main challenges in the Vafa-Witten theory: the non-compactness of moduli spaces of solutions to PDEs. This issue has many important ramifications and consequences, {\it e.g.} it prevents the theory to be a TQFT in the traditional sense. In this section, this issue is addressed with the help of symmetries and the corresponding equivariant parameters, reducing the calculations to compact fixed point sets. Therefore, comparing the results of such calculations, {\it e.g.} \eqref{VWbasic}, to the partition functions on $M_4 = S^1 \times M_3$ without holonomies along the $S^1$, such as \eqref{VWKahler}, can not be achieved simply by ``tunring off'' the holonomies, {\it i.e.} taking the limit $x \to 1$ and $t \to 1$. In addition, one needs to regularize in some way the contribution of zero-modes (non-compact directions) that make \eqref{VWbasic} singular in this limit. The difficulty is that, in a non-abelian theory on a general manifold, there is no canonical way to do this because all fields interact with one another. In the case of \eqref{VWbasic}, one could simply multiply this expression by a suitable factor before taking the limit; we will determine the precise factor in Lemma~\ref{lemma:xtasymptotics} and then present a thorough discussion of the corresponding zero-modes in section~\ref{sec:Qcohomology}.
\end{remark}

\subsection{Cohomology of $\CE$-valued Higgs bundles}

The ring of functions on $\CM_{\mathrm{flat}} (M_3, G_{\C})$ and, more generally, cohomology of $\CE$-valued Higgs bundles naturally appear in a slightly different but related problem. Here, we briefly outline the connection, in particular because it suggests that we should expect particular structural properties of $\HVW (M_3)$, such as the action of the modular group $SL(2, \Z)$ and the mapping class group of $M_3$:
\be
\mathrm{MCG} (M_3) \times SL(2, \Z) \; \acts \; \HVW (M_3)
\label{MCGaction}
\ee
as well as relations to the familiar variants of the Floer homology.

Starting in six dimensions and reducing on $T^2 \times M_3$, with a partial topological twist along $M_3$, we obtain the space of states in quantum mechanics that can be viewed in several equivalent ways \cite{Gukov:2016gkn}. Reducing on $T^2$ first (and also taking the limit $\mathrm{vol} (T^2) \to 0$) we obtain the Vafa-Witten theory on $\R \times M_3$ that leads to $\HVW (M_3)$. On the other hand, first reducing on $M_3$ gives a 3d theory $T[M_3]$. Its space of supersymmetric states on $T^2$ can be also viewed as the space of states in 2d A-model $T^{\text{A}}[M_3]$, whose target space is essentially $\CM_{\mathrm{flat}} (M_3, G_{\C})$. In this way we obtain a chain of approximate relations
\be
\HVW (M_3) \; \simeq \; \CH_{T[M_3]} (T^2) \; \simeq \; \CH_{T^{\text{A}}[M_3]} (S^1) \; \cong \; QH^* (\CM_{\text{flat}} (M_3, G_{\C}))
\label{HHHQH}
\ee
where {\it e.g.} in the last relation we only wrote the most interesting part of the moduli space\footnote{This point will be properly addressed in the rest of the paper, where all fields and all moduli will be taken into account. In particular, when $M_3 = S^1 \times \Sigma$, incorporating the extra moduli gives precisely the moduli space of $\CE$-valued Higgs bundles, and quantum cohomology in \eqref{HHHQH} is replaced by the classical cohomology.} and essentially identified $T[M_3]$ with $T^{\text{A}}[M_3]$. These two theories are related by a circle compactification, and if the circle has finite size the quantum cohomology in \eqref{HHHQH} should be replaced by the quantum (equivariant) K-theory $QK (\CM_{\text{flat}} (M_3, G_{\C}))$. A similar approximation enters the first relation in \eqref{HHHQH} because $\HVW (M_3)$ and $\CH_{T[M_3]} (T^2)$ differ by the Kaluza-Klein modes on $T^2$. This somewhat delicate point is often overlooked when 6d theory on a finite-size torus is treated as 4d $\CN=4$ super-Yang-Mills. Luckily, the role of these KK modes, which we plan to address more fully elsewhere, is not very important for the aspects of our interest here,\footnote{For example, in the computation of the equivariant character of the moduli space of $\CE$-valued Higgs bundles, taking the limit $\mathrm{vol} (S^1) \to 0$, {\it i.e.} replacing $T^2$ by $S^1$ in the computation of \eqref{Sxyt}--\eqref{Selements} has the effect of reducing the set $\{ \lambda \}$, so that instead of 10 possible values it contains only 5. However, since the remaining $\lambda$'s have the same values of $S_{0 \lambda}$ as the eliminated ones, this changes the calculation of \eqref{ZviaSmatrix} only by an overall factor of 2. This is a general property of any theory with matter fields in the adjoint representation of the gauge group $G=SU(2)$, which is certainly the case for the Vafa-Witten theory we are interested in.} in particular, it does not affect the symmetry \eqref{MCGaction}.

It was further proposed in \cite{Gukov:2016gkn} that \eqref{HHHQH} can be categorified into a higher algebraic structure, dubbed MTC$[M_3]$, that also controls the BPS line operators and twisted indices in 3d theory $T[M_3]$. In other words, \eqref{HHHQH} should arise as the Grothendieck ring of the category MTC$[M_3]$, which in general may not be unitary or semi-simple:
\be
K^0 (\text{MTC} [M_3]^{ss} ) \; \cong \; QK (\CM_{\text{flat}} (M_3, G_{\C}))
\label{KMTCss}
\ee
When $G_{\C}$ flat connections on $M_3$ are isolated, they are expected to correspond to simple objects in MTC$[M_3]$. Moreover, its Grothendieck group is expected to enjoy the action of the modular group $SL(2,\Z) = \mathrm{MCG} (T^2)$, which comes from the symmetry of $T^2$ and in the Vafa-Witten theory can be thought of the $S$-duality.
Similarly, the other part of the symmetry in \eqref{MCGaction}, namely $\mathrm{MCG} (M_3)$ comes from the $M_3$ part of the background and can be thought of as the duality of the 3d theory $T[M_3]$.

Although connections between different theories outlined here will not be used in the rest of the paper, they certainly help to understand the big picture and to see the origin of various structural properties of $\HVW (M_3)$. This includes the relation to the generalized cohomology of the moduli spaces and the action of the modular group in \eqref{MCGaction}. Curiously, it also suggests a relation to the Heegaard Floer homology of $M_3$. Namely, for $G=U(1)$ the space \eqref{HHHQH} can be identified with $\widehat{HF} (M_3) \otimes \C$, which Kronheimer and Mrowka \cite{MR2652464} conjectured to be isomorphic to the framed instanton homology, $I^{\#} (M_3) := I (M_3 \# T^3)$:
\be
I^{\#} (M_3) \cong \widehat{HF} (M_3; \C)
\label{framedinstanton}
\ee
On the other hand, because the space \eqref{HHHQH} with $G=U(1)$ is basically ``the Cartan part'' of $\HVW (M_3)$ for $G=SU(2)$, it suggests that the latter should contain the (framed) instanton Floer homology. Notice, while this argument used relations between various theories, the conclusion can be phrased entirely in the context of Vafa-Witten theory. As we shall see later, via direct analysis of the $Q$-cohomology and spectral sequences in the Vafa-Witten theory, this conclusion is on the right track (see {\it e.g.} \eqref{HFinHVW} and discussion that follows).

\parpic[r]{
	\begin{minipage}{40mm}
		\centering
		\includegraphics[scale=0.4]{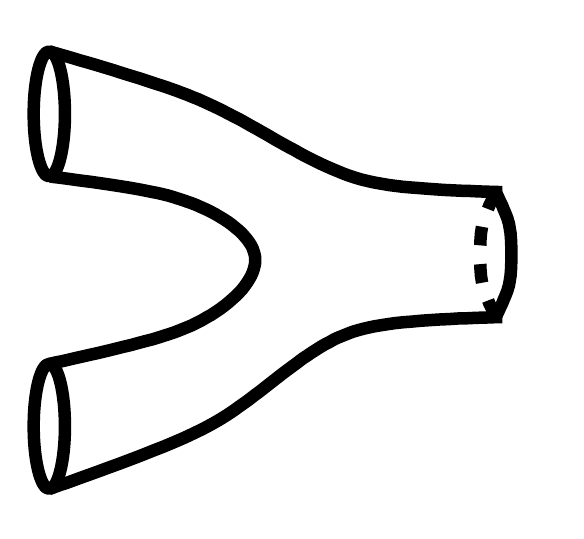}
		\captionof{figure}{Basic 2d cobordism that defines a product.}
		\label{fig:pants}
	\end{minipage}%
}
\subsection{Derivation of (\ref{VWbasic}b) and (\ref{VWbasic}c)}

As explained above, for $M_3 = S^1 \times \Sigma$ the graded dimension \eqref{grdimdef} is equal to the equivariant Verlinde formula for $\CE$-valued $G$-Higgs bundles on $\Sigma$, with $\CE$ as in \eqref{EVW}. In general, for $K^{R/2}_{\Sigma}$-valued Higgs fields with any $R$, it has the form of the ordinary Verlinde formula or A-model partition function on $\Sigma$ \cite{MR3670728} (see also \cite{Andersen:2016hoj,Halpern-Leistner:2016uay} for further mathematical developments). In other words, it is described by a 2d semisimple TQFT with a finite-dimensional space of states on $S^1$ (despite the fact that the space of states in Vafa-Witten theory is infinite-dimensional). Let $\{  \lambda \}$ be a basis of states that diagonalizes multiplication of the corresponding Frobenius algebra, called {\it the equivariant Verlinde algebra}, associated to a pair-of-pants and illustrated in Figure~\ref{fig:pants}.
We denote the corresponding eigenvalues of the structure constants by $(S_{0 \lambda})^{-1}$; their values will be determined shortly. Then,
\be
\grdim \HVW (M_3) \; = \; \sum_{\lambda} S_{0 \lambda}^{2-2g}
\label{ZviaSmatrix}
\ee
The sum runs over the set of admissible solutions to the Bethe ansatz equation for $\mathbb{T}$-valued variable $z$, {\it i.e.} the set of solutions from which those fixed by the Weyl group of $G$ are removed.

Just like in Donaldson theory one always starts with the gauge group $G=SU(2)$, in this paper we mainly focus on Vafa-Witten theory with $G=SU(2)$. Then, $\mathbb{T} = U(1)$ and we need to solve one Bethe equation for a single variable $z$:
\be
1 \; = \; \exp \left( \frac{\partial \tilde \CW}{\partial \log z} \right)
\label{BAE}
\ee
The values of $(S_{0\lambda})^{-2} = e^{2\pi i \Omega} \frac{\partial^2 \tilde \CW}{(\partial \log z)^2} \big\vert_{z_\lambda}$ consist of two factors, each evaluated on the solutions to \eqref{BAE}. One factor is simply the second derivative of the same function $\tilde \CW (z)$, called the {\it twisted superpotential}, that determines the Bethe equation itself. Both the Bethe equation and the other factor $e^{2\pi i \Omega}$, sometimes called the {\it effective dilaton}, are multiplicative in charged matter fields, in fact, in weight spaces of $\mathfrak{g} = \mathrm{Lie} (G)$, whereas $\tilde \CW (z)$ is additive. Specifically, a $K^{R/2}_{\Sigma}$-valued Higgs field contributes to the Bethe ansatz equation a factor
\be
L = K^{R/2}_{\Sigma}: \qquad
\exp \left( \frac{\partial \tilde \CW}{\partial \log z} \right) \; = \;
\frac{(t-z^2)^2}{(t z^2-1)^2}
\label{BetheHiggs}
\ee
where, as before, $z$ is the equivariant parameter for the gauge symmetry, while $t$ is the analogous equivariant parameter for a $U(1)$ (or $\C^*$) symmetry acting on the adjoint Higgs field by phase rotation (and dilation).
In particular, it follows that the {\it additive} contribution of a $K^{R/2}_{\Sigma}$-valued Higgs field to $\frac{\partial^2 \tilde \CW}{(\partial \log z)^2}$ is $\frac{4}{z^2 t^{-1} - 1} - \frac{4}{z^2 t - 1}$.
Similarly, a $K^{R/2}_{\Sigma}$-valued Higgs field contributes to $e^{2\pi i \Omega}$ a factor
\be
e^{2\pi i \Omega_{\mathrm{Higgs}}} \; = \;
\left( \frac{t^{3/2} z^2}{(t-1) (t-z^2) (t z^2-1)} \right)^{R-1}
\ee
Note that Higgs fields with $R=0$ and $R=2$ produce opposite contributions; this feature will play a role below and can be seen directly in the calculations of one-loop determinants \cite{MR3670728} that lead to the expressions quoted here.
While the $SU(2)$ gauge field (or, rather, superfield) does not contribute directly to the Bethe ansatz equation \eqref{BAE}, it contributes to $S_{0\lambda}^2$ a factor
\be
e^{- 2\pi i \Omega_{\mathrm{gauge}}} \; = \; \frac{1}{z^2} - 2 + z^2
\ee

Now we are ready to put these ingredients together and compute \eqref{ZviaSmatrix} for $\CE$-valued Higgs bundles on $\Sigma$, with $\CE$ in the form \eqref{EVW}. In particular, corresponding to the three terms in \eqref{EVW}, there are three factors \eqref{BetheHiggs} in the Bethe ansatz equation,
\be
1 \; = \; \exp \left( \frac{\partial \tilde \CW}{\partial \log z} \right) \; = \;
\frac{\left(t-z^2\right)^2 \left(x-z^2\right)^2 \left(y-z^2\right)^2}{\left(t z^2-1\right)^2 \left(x z^2-1\right)^2 \left(y z^2-1\right)^2}
\label{Betheeq}
\ee
where, in addition to $z$, we introduced three equivariant parameters $(x,y,t)$ associated with each of the terms in \eqref{EVW}. Equivalently, these are the equivariant parameters for the symmetry $U(1)_x \times U(1)_y \times U(1)_t$ of $\CE$-valued Higgs bundles on $\Sigma$; using Table~\ref{tab:symmetries} and the discussion around it, this symmetry can be identified with the maximal torus of the group $Spin(4)_R \times U(1)_R$ in the last row. From that discussion we also know that only $U(1)_t$ subgroup of $U(1)_x \times U(1)_y \times U(1)_t$ admits a lift to the Vafa-Witten on a general 4-manifold. In other words we can use $U(1)_x \times U(1)_y \times U(1)_t$ and the corresponding equivariant parameters for the equivariant Verlinde formula in the case of $M_3 = S^1 \times \Sigma$, but need to set $x=1$ and $y=1$ when we work with more general 3-manifolds and 4-manifolds.

There are 10 admissible solutions to \eqref{Betheeq}, {\it i.e.} 10 values $z_{\lambda}$ not fixed by the Weyl group of $G=SU(2)$. Aside from a simple pair of solutions $z = \pm i$ that can be seen with a naked eye, the expressions for $z_{\lambda}$ as functions of $(x,y,t)$ are not very illuminating. In fact, to simplify things further, we often find it convenient to set $x=y$. (Recall, that on general manifolds one needs to set $x=y=1$.)
Indeed, we should expect a simplication in this limit because the values of $R_i$ corresponding to $(x,y,t)$ are $(R_1, R_2, R_3) = (2,0,0)$, and the contributions to $e^{2\pi i \Omega}$ with $R=0$ and $R=2$ cancel each other, as was noted above.

Combining all contributions described above, a straightforward but slightly tedious calculation gives
$$
S_{00}^{2} \; = \; S_{01}^{2} \; = \;
\frac{t^{3/2} (x-1) (x+1)^3 y^{3/2}}{\left(t^2-1\right) x^{3/2} \left(y^2-1\right) (t (3 x y+x+y-1)+x (y-1)-y-3)}
$$
\begin{multline}
S_{02}^{2} \; = \; S_{03}^{2} \; = \; S_{04}^{2} \; = \; S_{05}^{2} \; = \; 
\\
\; = \;
\frac{t^{3/2} (x-1)^3 y^{3/2} (t x-1) (x y-1)}{4 (t-1) x^{3/2} (y-1) (t y-1) (t x y-1) (t (3 x y+x+y-1)+x (y-1)-y-3)}
\label{Sxyt}
\end{multline}
$$
S_{06}^{2} \; = \; S_{07}^{2} \; = \; S_{08}^{2} \; = \; S_{09}^{2} \; = \;
\frac{t^{3/2} \left(x^2-1\right) y^{3/2} (t x-1) (x y-1)}{4 \left(t^2-1\right) x^{3/2} \left(y^2-1\right) (t y-1) (t x y+1)}
$$
for generic $(x,y,t)$. Specializing to $x=y$ we obtain much simpler and easier to read expressions:
\be
\begin{array}{l}
S_{00}^{2} \; = \; S_{01}^{2} \; = \;
\frac{t^{3/2} (x+1)}{\left(t^2-1\right) (t (3 x-1)+x-3)} \\
S_{02}^{2} \; = \; S_{03}^{2} \; = \; S_{04}^{2} \; = \; S_{05}^{2} \; = \; \frac{t^{3/2} (x-1)^3}{4 (t-1) \left(t x^2-1\right) (t (3 x-1)+x-3)} \\
S_{06}^{2} \; = \; S_{07}^{2} \; = \; S_{08}^{2} \; = \; S_{09}^{2} \; = \; \frac{t^{3/2} \left(x^2-1\right)}{4 \left(t^2-1\right) \left(t x^2+1\right)}
\end{array}
\label{Selements}
\ee
Evaluating \eqref{ZviaSmatrix} for $g=0$ with \eqref{Sxyt} gives
\begin{multline}
\label{S2S1xyt}
\grdim \HVW (S^2 \times S^1) = \\ =
\frac{2 t^{3/2} (x-1) y^{3/2} \left(x \left(t \left(x y \left(t \left(x^2+x+1\right) y-(t+1) x-x y\right)+y+1\right)-x+y-1\right)-1\right)}{\left(t^2-1\right) x^{3/2} \left(y^2-1\right) (t y-1) \left(t^2 x^2 y^2-1\right)}
= \\ = 2 t^{3/2} \left(
\frac{(x-1) y^{3/2} (x^2-x y+x+1)}{x^{3/2} (y-1) (y+1)}
+ O(t) \right)
\end{multline}
which turns into a more compact expression (\ref{VWbasic}b) when we specialize to $x=y$. Similarly, for general $g>0$ eqs. \eqref{ZviaSmatrix} and \eqref{Selements} lead to the claim in (\ref{VWbasic}c). The derivation of (\ref{VWbasic}a) is much simpler and will be presented shortly in section~\ref{sec:Qcohomology}.

It is curious to note that \eqref{S2S1xyt} naively has a pole of order 4 at $x=y=t=1$ (associated with 4 non-compact complex directions in the moduli space), whereas its simplified version  (\ref{VWbasic}b) has only an order-2 pole. This happens because of a partial cancellation between the numerator and the denominator in \eqref{S2S1xyt} and teaches us a useful lesson. Namely, if we were to multiply \eqref{S2S1xyt} by a factor $(t-1) (y-1) (t y-1) (t x y-1)$ that naively cancels the pole at generic $(x,y,t)$, we would get zero after a further specialization to $x=y=t=1$. Continuing along these lines, by a direct calculation it is not difficult to prove a general result:

\begin{lemma}\label{lemma:xtasymptotics}
At $x=1=t$, the asymptotic behavior of $\grdim \HVW \left( S^1 \times \Sigma_g \right)$ specialized to $y=x$, {\it i.e.} that of (\ref{VWbasic}b) and (\ref{VWbasic}c), is given by
\be
\grdim \HVW \left( S^1 \times \Sigma_g \right)
\; \sim \;
\begin{cases}
4 \left( 8 \frac{1-t}{1-x} \right)^{3g-3} , & \text{if } \; g > 1 \\
10 , & \text{if } \; g=1 \\
\frac{1}{(1-t) (1- t x^2)} , & \text{if } \; g=0
\end{cases}
\ee
\end{lemma}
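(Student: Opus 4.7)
The plan is to derive each of the three cases of the lemma by isolating the dominant contribution(s) to $\grdim \HVW(S^1 \times \Sigma_g) = \sum_\lambda S_{0\lambda}^{2-2g}$ coming from the explicit list of $S$-matrix elements in \eqref{Selements}. Since the $y=x$ specialization has already been performed, the ten summands reduce to three distinct values, with multiplicities $2$, $4$, and $4$:
\begin{equation}
\grdim \HVW(S^1\times\Sigma_g) \;=\; 2\,(S_{00}^{2})^{1-g} \;+\; 4\,(S_{02}^{2})^{1-g} \;+\; 4\,(S_{06}^{2})^{1-g}.
\end{equation}
The two boundary cases are then easy: for $g=1$ the exponent $1-g$ vanishes, so each term equals $1$, and the total is $2+4+4=10$. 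For $g=0$ one can bypass \eqref{Selements} altogether and expand (\ref{VWbasic}b) directly: write $1-t^{2}=(1-t)(1+t)$ and $1-t^{2}x^{4}=(1-tx^{2})(1+tx^{2})$, specialize the regular factors $2t^{3/2}(tx^{4}+1)$, $(1+t)$, and $(1+tx^{2})$ to their values at $x=t=1$, and collect the result, which leaves precisely $\frac{1}{(1-t)(1-tx^{2})}$.

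For the main case $g>1$ the plan is to substitute $A=1-x$, $B=1-t$ and expand each $S_{0\lambda}^{2}$ to leading order in $A,B$. A short calculation gives
\begin{equation}
S_{00}^{2} \;\sim\; \frac{1}{2B(2A+B)},\qquad
S_{02}^{2} \;\sim\; \frac{A^{3}}{8B(2A+B)^{2}},\qquad
S_{06}^{2} \;\sim\; \frac{A}{8B},
\end{equation}
where the two factors vanishing at $(1,1)$ in each denominator are handled by Taylor expanding $t(3x-1)+x-3 = 4A+2B+O(A,B)^{2}$ and $tx^{2}-1 = -(2A+B)+O(A,B)^{2}$. Because $1-g<0$, the dominant term in the sum is the one with the smallest $S_{0\lambda}^{2}$, which is $S_{02}^{2}$ thanks to the $A^{3}$ zero in its numerator. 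Raising $(S_{02}^{2})^{-1} \sim 8B(2A+B)^{2}/A^{3}$ to the power $g-1$ and multiplying by the multiplicity $4$ extracts the leading singular part in terms of $B/A = (1-t)/(1-x)$. The subleading contributions of $S_{00}^{2}$ and $S_{06}^{2}$ are controlled by verifying that their leading singularities contribute at strictly lower order in $(B/A)$, and hence do not affect the stated asymptotic.

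The main obstacle is really bookkeeping rather than conceptual: one has to be careful about signs when taking the leading expansion (since several of the factors, e.g.\ $t-1$, $tx^{2}-1$, $t(3x-1)+x-3$, are simultaneously negative for $0<x,t<1$), about the correct coefficient hidden in the combination $8B(2A+B)^{2}/A^{3}$, and about checking that all other summands in \eqref{ZviaSmatrix} are genuinely subdominant rather than conspiring to combine with the leading one. A useful sanity check is to plug the formula back at $g=1$ and $g=0$ (where the analysis is trivial or independently confirmed by (\ref{VWbasic}b)) and to verify numerically at, say, $(x,t)=(1-\epsilon,1-K\epsilon)$ with $\epsilon \to 0$ that the three multiplicities $(2,4,4)$ and the factor $8^{3g-3}$ in front of $((1-t)/(1-x))^{3g-3}$ reproduce the claimed leading behavior for several small values of $g$.
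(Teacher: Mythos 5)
Your overall strategy --- split the ten Bethe vacua into the three distinct values of $S_{0\lambda}^2$ with multiplicities $2,4,4$, dispose of $g=1$ by the vanishing exponent and of $g=0$ by expanding (\ref{VWbasic}b) directly, and for $g>1$ identify the summand with the \emph{smallest} $S_{0\lambda}^2$ as dominant --- is exactly the ``direct calculation'' the paper has in mind, and your local expansions are correct: with $A=1-x$, $B=1-t$ one indeed finds $S_{00}^2\sim \tfrac{1}{2B(2A+B)}$, $S_{02}^2\sim\tfrac{A^3}{8B(2A+B)^2}$, $S_{06}^2\sim\tfrac{A}{8B}$, all positive near $(1,1)$, and $S_{02}^2$ is always the smallest since $A^2\le (2A+B)^2$. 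The $g=0$ and $g=1$ cases are complete as you describe them.

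The gap is in the one step you defer to ``bookkeeping'': passing from your leading term
\[
4\left(\frac{8B(2A+B)^2}{A^3}\right)^{g-1}
=4\Bigl(8c(2+c)^2\Bigr)^{g-1},\qquad c=\frac{1-t}{1-x},
\]
to the lemma's $4\bigl(8c\bigr)^{3g-3}=4\bigl(512\,c^3\bigr)^{g-1}$. These two expressions are \emph{not} asymptotically equal: they agree only when $(2+c)^2=64c^2$, i.e.\ at the single ratio $c=2/7$. Your own proposed sanity check exposes this: at $g=2$, $x=t=1-\epsilon$ (so $c=1$) the sum evaluates numerically to $\approx 320=4\cdot 72+4\cdot 8$, whereas the displayed formula gives $4\cdot 8^3=2048$; in the regime $A\ll B$ the exponent $3g-3$ of $c$ is right but the constant is $8^{g-1}$, not $8^{3g-3}$. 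So as written the proposal does not establish the statement: you must either (i) identify and justify a specific limiting prescription under which the factor $(2A+B)^2$ may be replaced by $(8B)^2/... $ --- and none of the natural orders of limits does this --- or (ii) record that the direct calculation actually yields the refined leading behavior $4\bigl(8\,(1-t)(1-tx^2)^2/(1-x)^3\bigr)^{g-1}$ (note $1-tx^2\sim 2A+B$, matching the exact factor appearing in the $g=0$ case), and that the constant in the displayed $g>1$ formula must be adjusted accordingly. Separately, for $c$ of order one the $S_{06}$ contribution is suppressed only by the constant $(2+c)^{-2(g-1)}$, not parametrically, so if you keep only the $S_{02}$ term you should state precisely in which regime of $c$ the other summands are genuinely negligible rather than merely smaller by a fixed factor.
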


Other limits and specializations of \eqref{ZviaSmatrix} can be analyzed in a similar fashion.

\begin{remark}
By construction, the graded dimension \eqref{ZviaSmatrix} of the Floer homology in Vafa-Witten theory reduces to the equivariant Verlinde formula for ordinary Higgs bundles with either $R=0$ or $R=2$ in suitable limits:
\begin{subequations}\label{xtlimits}
\be
R = 0: \qquad x \to 0, \quad y \to 0, \quad t = \mathrm{fixed}
\ee
\be
R = 2: \qquad x = \mathrm{fixed}, \quad y \to 0, \quad t \to 0
\ee
\end{subequations}
For example, in the case of genus $g=2$, the latter gives
\be
\frac{16 x^4 + 49 x^3 + 81 x^2 + 75 x + 35}{(1 - x^2)^3}
\; = \; 35+75 x+186 x^2+274 x^3+469 x^4
+ \ldots
\label{xlimit}
\ee
up to an overall factor $\left( \frac{x}{yt} \right)^{3/2}$ which has to do with the normalization of Vafa-Witten invariants. The expansion \eqref{xlimit} agrees with eq.(1.5) in \cite{MR3670728} at level $k= 4 = 0 + 2 + 2$. Note, although the Higgs fields with $R=0$ effectively disappear in the limit (\ref{xtlimits}b), they each leave a trace by shifting the value of $k$ by $+2$. For gauge groups of higher rank, the shift is by the dual Coxeter number, {\it cf.} \cite[sec. 5.1.1]{MR3670728}.
\end{remark}

More generally, for other values of $g$ similar expressions can be obtained by specializing \eqref{Sxyt} to $y=0$ and $t=0$:
\be
\begin{array}{l}
S_{00}^{2} \; = \; S_{01}^{2} \; = \;
\frac{(x-1) (x+1)^3}{x+3} \\
S_{02}^{2} \; = \; S_{03}^{2} \; = \; S_{04}^{2} \; = \; S_{05}^{2} \; = \; \frac{(x-1)^3}{4 (x+3)} \\
S_{06}^{2} \; = \; S_{07}^{2} \; = \; S_{08}^{2} \; = \; S_{09}^{2} \; = \; \tfrac{1}{4} (x^2-1)
\end{array}
\label{SeqVerlindelimit}
\ee
where we again omitted the overall factor $\left( \frac{x}{yt} \right)^{3/2}$ related to a choice of normalization. Since the values of $S_{0 \lambda}^2$ are pairwise equal, there are effectively 5 values of $\lambda$, in agreement with the fact that the equivariant Verlinde formula has $k+1$ Bethe vacua for general $k$.

Similarly, the limit (\ref{xtlimits}a) can be obtained by specializing \eqref{Selements} to $x=0$.

\begin{remark}\label{rem:decoration}
So far we tacitly ignored one important detail, which does not appear for $G=SU(2)$ but would enter the discussion for more general gauge groups. In general, the Vafa-Witten theory on $M_4$ requires a choice of a decoration ('t Hooft flux) valued in $H^2 (M_4; \Gamma) \cong \mathrm{Hom} \left( H^2 (M_4; \Z) , \Gamma \right)$, where
\be
\Gamma = \pi_1 (G)
\ee
On a 4-manifold of the form $M_4 = S^1 \times M_3$ this requires a choice of a decoration valued in $H^2 (M_3; \Gamma)$, which can be thought of as simply the restriction of the decoration from $M_4$, as well as the choice of grading valued in $H^2 (M_3; \widehat{\Gamma})$, where
\be
\widehat{\Gamma} = \mathrm{Hom} \left( \Gamma , U(1) \right)
\ee
is the Pontryagin dual group. Therefore, we conclude that, in general, $\HVW (M_3)$ is decorated by $H^2 (M_3; \Gamma)$ and graded by $H^2 (M_3; \widehat{\Gamma})$:
\begin{center}
\begin{tabular}{c|c}
	$\HVW (M_3)$ & structure \\
	\hline
	graded by &  $\phantom{\int^{\int}}$ $H^2 (M_3; \widehat{\Gamma})$ $\phantom{\int^{\int}}$  \\
	decorated by & $\phantom{\int^{\int}}$ $H^2 (M_3; \Gamma)$ $\phantom{\int^{\int}}$  \\
\end{tabular}
\end{center}
\end{remark}

We close this section by drawing a general lesson from preliminary considerations presented here. In a simple infinite family of 3-manifolds considered here, we found that the graded dimension of $\HVW (M_3)$ is always a power series, rather than a finite polynomial. In fact, already from the preliminary analysis in this section one can see several good reasons why $\HVW (M_3)$ is expected to be infinite-dimensional for a general 3-manifold, a conclusion that will be further supported by considerations in section~\ref{sec:Qcohomology}.
\begin{conjecture}\label{conj:infdimnl}
$\HVW (M_3)$ is infinite-dimensional for any closed 3-manifold $M_3$.
\end{conjecture}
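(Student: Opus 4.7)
The plan is to isolate a universal infinite-dimensional summand in $\HVW(M_3)$ coming from the trivial flat connection, in direct analogy with the role of the $\CT^+$-tower in $HF^+$ at the trivial spin$^c$ structure. For any closed $M_3$ and any gauge group $G$, the trivial connection with vanishing Higgs fields is always a stationary solution to the dimensional reduction of \eqref{VWeqs}, so it always contributes a (stacky) point to the moduli space entering the $Q$-cohomology computation of section~\ref{sec:Qcohomology}. The basic case $\grdim \HVW(S^3) = \frac{1}{1-t^2}$ from \eqref{VWbasic}(a) already exhibits the phenomenon: a single point in the moduli space contributes an entire $\CT^+$-tower.

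First I would use the $Q$-cohomology description to compute the local contribution of the trivial connection for general $M_3$. Near this point the linearized theory decomposes under the holonomy and $R$-symmetry groups of Table~\ref{tab:symmetries} according to \eqref{VWbosfields}. The scalar field $\phi$ (and its conjugate $\bar\phi$), transforming in the $(\mathbf{1},\mathbf{1},\mathbf{3})$ of $SU(2)_\ell \times SU(2)_r \times SU(2)_R$, always possesses a constant zero-mode on $M_3$ valued in the Cartan subalgebra $\mathfrak{t} \subset \mathfrak{g}$. This Cartan zero-mode is a genuine non-compact bosonic direction on which $U(1)_t \subset SU(2)_R$ acts with a definite non-zero weight. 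Equivariant localization with respect to $U(1)_t$ on this direction then produces a factor of the form $\prod_i \frac{1}{1-t^{2}}$ (one factor per Cartan direction) in the local contribution to $\grdim \HVW(M_3)$, reproducing the $\CT^+$ tower observed for $S^3$ and in the $\lambda=0,1$ sectors of \eqref{Selements}.

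The second step is to rule out cancellation of this tower against other contributions. Other fixed loci (non-trivial flat connections, reducible solutions, irreducible non-abelian pieces) each contribute their own Poincar\'e series. Because $\grdim \HVW (M_3) = \mathrm{Tr}_{\HVW(M_3)} t^{\deg}$ is a trace in each $U(1)_t$-weight subspace, different sectors can only shift the grading of the tower, not annihilate it. A clean way to package this is to exhibit, via the spectral sequence mentioned after \eqref{framedinstanton}, a direct summand $\CT^+_{n(M_3)} \subset \HVW(M_3)$ corresponding to the trivial flat connection decorated by the Cartan zero-mode of $\phi$.

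The hard part will be making the local equivariant cohomology rigorous at the trivial connection when the remaining moduli space is highly non-compact or singular, and when $b_1(M_3) > 0$ introduces additional flat-connection zero-modes that interact with the Cartan directions of $\phi$. These are exactly the regularization issues flagged in Remark~\ref{rem:unrefined}: one must choose a regularization of the non-compact directions that is compatible with the $Q$-cohomology and preserves positivity of the $t$-grading. A complete argument would require a Floer-theoretic analog of the derivation of \eqref{VWKahler}, adapted to $M_4 = \R \times M_3$ with the $U(1)_t$ holonomy turned on; until that is in place, the positivity argument above identifies the $\CT^+$-tower sector by sector but does not rule out an unexpected telescoping cancellation across infinitely many sectors, which is the remaining obstacle to promoting Conjecture~\ref{conj:infdimnl} to a theorem.
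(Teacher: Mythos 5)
This statement is a \emph{conjecture} in the paper: no proof is given there, only supporting evidence, and that evidence is essentially what you have assembled. The paper's own heuristics are (i) the explicit computations for $S^3$ and $\Sigma_g\times S^1$, where $\grdim\HVW$ is always an infinite power series, (ii) the observation in section~\ref{sec:Qcohomology} that the non-compact bosonic directions of $\MVW$ (e.g.\ the $\C$ factors in \eqref{MVWabelian}, or the $\C[u]$ tower generated by $u=\Tr\phi^2$ in Proposition~\ref{prop:HS3}) play the role of holomorphic functions on $\C$, and (iii) the expectation, stated in section~\ref{sec:final}, that the trivial (reducible) flat connection always contributes a $\CT^+$ tower. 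Your proposal is a more organized version of the same reasoning, so you have not taken a genuinely different route --- and, as you yourself note, you have not closed the argument either.

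The gap you flag at the end is the real one, and it is worth being precise about why it is not merely technical. First, your "one factor of $\frac{1}{1-t^2}$ per Cartan direction" computes, at best, a fixed-point contribution to an equivariant index; to conclude that $\HVW(M_3)$ itself is infinite-dimensional you must show that the classes $u^k=(\Tr\phi^2)^k$ survive in $Q$-cohomology for infinitely many $k$, i.e.\ that they are neither $Q$-exact nor killed by instanton corrections to the differential. The absence of such corrections is itself only Conjecture~\ref{conj:noinstantons}, so your argument is conditional on another unproven statement of the paper. Second, the "trace positivity" step conflates two different objects: positivity of $\dim\HVW^n$ is automatic, but the decomposition of $\grdim\HVW$ into per-sector contributions is a localization formula that has not been established for the cohomology (as opposed to an Euler-characteristic-type invariant), and Remark~\ref{rem:unrefined} makes clear that even defining the contribution of the non-compact directions requires a regularization choice that could in principle interfere with the tower. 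Finally, a small imprecision: for higher-rank $G$ the tower is generated by the Casimirs of $\phi$, so the local factor is $\prod_i(1-t^{2d_i})^{-1}$ with $d_i$ the exponents of $\mathfrak g$, not $\prod_i(1-t^2)^{-1}$; this does not affect the conclusion but signals that the "Cartan zero-mode" picture needs to be replaced by the invariant-theoretic one before gauge fixing. In short, the proposal correctly identifies the expected mechanism but, like the paper, stops at evidence rather than proof.
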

In light of this Conjecture, the role of the equivariant parameters $(x,y,t)$ --- that were central to the considerations of the present section --- is to provide a way to regularize the infinity in $\dim \HVW (M_3)$. This also illustrate well the challenge of computing $\HVW (M_3)$ on more general 3-manifolds: in the absence of suitable symmetries and equivariant parameters, one has to work with the entire $\HVW (M_3)$, which is infinite-dimensional.

\section{Q-cohomology}
\label{sec:Qcohomology}

In this section, we pursue the same goal --- to explicitly compute $\HVW (M_3)$ for a class of 3-manifolds --- via a direct analysis of $Q$-cohomology in Vafa-Witten theory. The results agree with the preliminary considerations in section~\ref{sec:eqVerlinde}.

Recall, that the off-shell realization \cite{Vafa:1994tf} of Vafa-Witten theory involves the following set of fields\footnote{Here, the subscript indicates the degree of the differential form on a 4-manifold, while the superscript is the $U(1)_t$ grading, also known as the ``ghost number.''}
\be
\begin{array}{lcl}
	\multicolumn{2}{c}{~~\text{{\bf bosons:}}~~} \\[.2cm]
	\phi^{+2},~{\bar \phi}^{\,-2},~C^0 & : \quad & \text{scalars (0-forms)} \\
	A^0_1,~\tilde H^0_1 & : \quad & \text{1-forms} \\
	(B^+_2)^0,~(D^+_2)^0 & : \quad & \text{self-dual 2-forms}
\end{array}
\qquad
\begin{array}{lcl}
	\multicolumn{2}{c}{~~\text{{\bf fermions}}~~} \\[.2cm]
	\zeta^{+1}, ~\eta^{-1} & : \quad & \text{scalars (0-forms)} \\
	\psi^{+1}_1, ~\tilde \chi_1^{-1} & : \quad & \text{1-forms} \\
	(\tilde \psi^+_2)^{+1}, ~(\chi^+_2)^{-1} & : \quad & \text{self-dual 2-form}
\end{array}
\notag
\ee
and their $Q$-transformations:
\be
\begin{aligned}
	Q A & = \psi_1 \\
	Q \phi & = 0 \\
	Q \bar \phi & = \eta \\
	Q \eta & = i [\bar \phi , \phi] \\
	Q \psi_1 & = d_A \phi \\
	Q \chi^+_2 & = D^+_2 + s_2^+ \\
	Q D^+_2 & = i [\chi^+_2, \phi] - Q s_2^+
\end{aligned}
\qquad\qquad
\begin{aligned}
	Q B^+_2 & = \tilde \psi^+_2 \\
	Q \tilde \psi^+_2 & = i [B^+_2 , \phi] \\
	Q \tilde \chi_1 & = \tilde H_1 + s_1 \\
	Q \tilde H_1 & = i [\tilde \chi_1 , \phi] - Q s_1 \\
	Q C & = \zeta \\
	Q \zeta & = i [C, \phi]
\end{aligned}
\label{BRST}
\ee
where
$$
s_2^+ = F^+_{\alpha \beta} + [B^+_{\gamma \alpha} , B^{+\gamma}_{\beta}] + 2i [B^+_{\alpha \beta} , C]
$$
$$
s_1 = D_{\alpha \dot \alpha} C + i D_{\beta \dot \alpha} {B^{+\beta}}_{\alpha}
$$
The on-shell formulation is obtained simply by setting $D^+_{\alpha \beta} = 0 = \tilde H_{\alpha \dot \alpha}$.

\begin{lemma}
Up to gauge transformations, $Q^2 = 0$.
\end{lemma}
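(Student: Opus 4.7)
The plan is to identify $Q^2$, acting on every field of the multiplet, with an infinitesimal gauge transformation whose parameter is the ghost-like scalar $\phi$. Explicitly, the target identity is $Q^2 A = d_A \phi$ on the connection and $Q^2 X = i[X,\phi]$ on every other ad-valued field $X$. Since gauge equivalence is quotiented out in the definition of $\HVW(M_3)$, establishing this identity proves the lemma. The argument is a field-by-field verification using the transformation rules \eqref{BRST} and the Leibniz rule for the odd derivation $Q$.

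First I would dispatch the three closed subsectors. For $(\phi,\bar\phi,\eta)$: the equation $Q\phi=0$ gives $Q^2\phi=0$; then $Q^2\bar\phi = Q\eta = i[\bar\phi,\phi]$; and $Q^2\eta = iQ[\bar\phi,\phi] = i[\eta,\phi] + i[\bar\phi,Q\phi] = i[\eta,\phi]$. The $(C,\zeta)$ sector is structurally identical. For the gauge multiplet, $Q^2 A = Q\psi_1 = d_A\phi$ directly, and $Q^2\psi_1 = Q(d_A\phi) = [QA,\phi] + d_A(Q\phi) = [\psi_1,\phi]$ (up to the convention for the factor of $i$). The $B$-multiplet is equally mechanical: $Q^2 B^+_2 = Q\tilde\psi^+_2 = i[B^+_2,\phi]$ and $Q^2\tilde\psi^+_2 = iQ[B^+_2,\phi] = i[\tilde\psi^+_2,\phi]$.

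The main obstacle is the pair of auxiliary-field multiplets $(\chi^+_2, D^+_2)$ and $(\tilde\chi_1, \tilde H_1)$, whose $Q$-transformations involve the nonlinear source polynomials $s^+_2$ and $s_1$. The identity on $\chi^+_2$ is immediate by design:
\begin{equation*}
Q^2\chi^+_2 \;=\; Q(D^+_2 + s^+_2) \;=\; \bigl(i[\chi^+_2,\phi] - Qs^+_2\bigr) + Qs^+_2 \;=\; i[\chi^+_2,\phi].
\end{equation*}
The genuinely nontrivial consistency check is on the auxiliary field itself:
\begin{equation*}
Q^2 D^+_2 \;=\; Q\bigl(i[\chi^+_2,\phi] - Qs^+_2\bigr) \;=\; i[D^+_2 + s^+_2,\phi] - Q^2 s^+_2,
\end{equation*}
which reduces the desired equation $Q^2 D^+_2 = i[D^+_2,\phi]$ to the single algebraic identity $Q^2 s^+_2 = i[s^+_2,\phi]$. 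The analogous reduction on the $(\tilde\chi_1, \tilde H_1)$ sector produces $Q^2 s_1 = i[s_1,\phi]$.

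I would close by establishing these two $Q^2 s = i[s,\phi]$ identities via the Leibniz rule. Since $Q^2$ is even (it is the symmetric bracket of the odd derivation $Q$), it acts as a degree-zero derivation. Both $s^+_2 = F_A^+ + [B^+,B^+] + 2i[B^+,C]$ and $s_1 = d_A C + i\, d_A B^+$ are polynomial in $A$, $B^+$, $C$ and their covariant derivatives, each building block of which has already been shown to satisfy $Q^2 = i[\cdot,\phi]$ in the earlier steps; note that $d_A$ commutes with $i[\cdot,\phi]$ up to the correction $[d_A\phi,\cdot] = [QA,\cdot]$, which enters correctly when Leibniz is applied. Propagating the identity through the polynomial expressions then yields the required consistency, completing the verification. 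The whole proof is thus a direct but carefully orchestrated computation; the single delicate point is this last Leibniz step, which is precisely where the introduction of the auxiliary fields $D^+_2$ and $\tilde H_1$ pays off.
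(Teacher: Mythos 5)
Your proof is correct and is exactly the approach the paper takes --- the paper's entire proof is the assertion that the lemma ``is easily demonstrated by a direct calculation,'' and you have simply carried that calculation out, correctly organizing it around the identification of $Q^2$ with the gauge transformation generated by $\phi$ and reducing the auxiliary-field sectors to the derivation identity $Q^2 s = i[s,\phi]$. (One cosmetic slip: in the final paragraph the correction term should read $[Q^2 A,\cdot]=[d_A\phi,\cdot]$ rather than $[QA,\cdot]$, since $QA=\psi_1$; this does not affect the argument.)
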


\begin{proof}
This is easily demonstrated by a direct calculation.
\end{proof}

This basic fact about topological gauge theory is the reason one can define the $Q$-cohomology groups
\be
\HVW (M_3) := \frac{\text{ker} \; Q}{\text{im} \; Q}
\label{HVWviaQ}
\ee
which are the main objects of study in the present paper.
\parpic[r]{
	\begin{minipage}{40mm}
		\centering
		\includegraphics[scale=0.15]{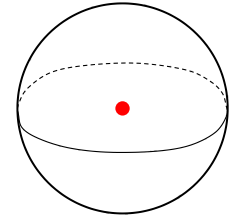}
		\captionof{figure}{The space of local operators is isomorphic to the space of states on a sphere.}
		\label{fig:S2ops}
	\end{minipage}%
}
Just like the fields of Vafa-Witten theory are differential forms of various degrees graded by weights of the $U(1)_t$ symmetry, so are the $Q$-cohomology classes. The cohomology classes represented by differential forms of degree 0 are called {\it local operators}, {\it i.e.} operators supported at points on a 4-manifold. Since in four dimensions a link of a point is a 3-sphere, the space of local operators is naturally isomorphic to $\HVW (S^3)$, {\it cf.} Figure~\ref{fig:S2ops}. This relation, often called the state-operator correspondence, has obvious generalizations that will be discussed below.

For example, when we talk about $M_3 = S^2 \times S^1$ we are effectively counting local operators in three-dimensional theory obtained by dimensional reduction of the 4d theory on a circle. Such 3d local operators come either from local operators in four dimensions or from 4d line operators, {\it i.e.} $Q$-cohomology classes supported on lines (a.k.a. 1-observables). We will return to the local operators of such 3d theory shortly, after discussing the original 4d theory first, thus providing a proof of (\ref{VWbasic}a).

\begin{proposition}
\label{prop:HS3}
In the notations \eqref{Ttower} introduced in the Introduction, the space of local observables in Vafa-Witten theory with gauge group $G=SU(2)$, {\it i.e.} the space of states on $M_3 = S^3$, is
\be
\HVW (S^3) = \CT^+_0  \cong \C [u]
\label{HS3}
\ee
generated by $u = \Tr \phi^2$.
\end{proposition}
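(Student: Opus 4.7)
The plan is to reduce the problem to a finite algebraic computation of the $Q$-cohomology on a polynomial algebra of 0-form fields. Only 0-form fields contribute to local operators: higher-form fields vanish at a point, and derivatives $\partial_\mu \mathcal{O}$ are $Q$-exact whenever $\mathcal{O}$ is $Q$-closed, via the standard TQFT identity $P_\mu = \{Q, G_\mu\}$ for one of the unbroken supercurrents recorded in Table~\ref{tab:symmetries}. So only $\phi, \bar\phi, C, \eta, \zeta$ --- all $\mathfrak{g}=\mathfrak{su}(2)$-valued --- enter. Because $Q^2 = 0$ only up to gauge transformations, I pass to the gauge-invariant subalgebra $\mathcal{A}^G$ of the super-polynomial algebra $\mathcal{A} = \C[\phi, \bar\phi, C] \otimes \Lambda[\eta, \zeta]$, on which $Q$ is genuinely nilpotent, and identify $\HVW(S^3)$ with $H^*(\mathcal{A}^G, Q)$.

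Next I would run the spectral sequence associated to the filtration by degree in $\phi$, writing $Q = Q_0 + Q_1$ with
\begin{align*}
Q_0 \bar\phi &= \eta, & Q_0 C &= \zeta, & Q_0 \phi &= Q_0 \eta = Q_0 \zeta = 0, \\
Q_1 \eta &= i[\bar\phi, \phi], & Q_1 \zeta &= i[C, \phi], & Q_1 \phi &= Q_1 \bar\phi = Q_1 C = 0.
\end{align*}
The $E_0$-differential $Q_0$ is the polynomial de Rham differential pairing each commuting generator $\bar\phi^a$ with the anticommuting $\eta^a$ and $C^a$ with $\zeta^a$; by the polynomial Poincar\'e lemma (equivalently, acyclicity of the Koszul resolution of $\C$ over $\C[\bar\phi, C]$), its cohomology on $\mathcal{A}$ is $\C[\phi]$. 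Since $SU(2)$ is reductive, taking $G$-invariants commutes with $Q_0$-cohomology, and Chevalley's theorem gives $E_1 = \C[\phi]^{SU(2)} = \C[\Tr\phi^2]$.

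Finally, I would show that the spectral sequence collapses at $E_1$. Every generator $(\Tr\phi^2)^k$ lifts tautologically to itself in $\mathcal{A}^G$, and $Q_1$ annihilates it because $Q_1 \phi = 0$ and $(\Tr\phi^2)^k$ contains no $\eta, \zeta$ factors for $Q_1$ to act on; hence all higher differentials vanish, yielding $\HVW(S^3) \cong \C[\Tr\phi^2] = \C[u]$ with $u = \Tr\phi^2$, and the $U(1)_t$ weight of $u$ recovers the Poincar\'e series $\tfrac{1}{1-t^2}$ quoted in (\ref{VWbasic}a). The main obstacle I anticipate is not the Koszul step --- which is routine --- but the first reduction: rigorously arguing that the classical algebraic $Q$-cohomology of gauge-invariant polynomial local operators at a point captures the full (quantum) $\HVW(S^3)$, in particular that the supercurrent identity $P_\mu = \{Q, G_\mu\}$ persists on the curved background $S^3$ and that no non-perturbative sectors contribute additional states beyond the perturbative algebra.
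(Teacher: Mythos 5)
Your argument is correct and follows essentially the same route as the paper: restrict to gauge-invariant polynomials in the 0-forms $\phi,\bar\phi,C,\eta,\zeta$, then compute the $Q$-cohomology, which the paper compresses into ``inspecting the $Q$-action leads to $u=\Tr\phi^2$'' and which your Koszul/spectral-sequence step for the contractible pairs $(\bar\phi,\eta)$ and $(C,\zeta)$ makes precise. The caveat you flag at the end (derivatives being $Q$-exact via the descent/supercurrent identity, and the absence of extra non-perturbative sectors) is likewise taken for granted in the paper's proof, so no gap relative to the paper.
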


Moreover, \eqref{HS3} transforms trivially under the modular $SL(2,\Z)$ action discussed in section~\ref{sec:eqVerlinde}.

\begin{proof}
To construct a local operator, we can only use 0-forms. We can not use their exterior derivatives or forms of higher degrees because that would require the metric and the resulting operators would be $Q$-exact.
This limits our arsenal to the 0-forms $\phi$, ${\bar \phi}$, $C$, $\zeta$, and $\eta$. In addition, all observables (not only local) must be gauge-invariant. Since all fields in Vafa-Witten theory transform in the adjoint representation of the gauge group, this means we need to consider traces of polynomials in $\phi$, ${\bar \phi}$, $C$, $\zeta$, and $\eta$. Inspecting the $Q$-action \eqref{BRST} on these fields leads to $u = \Tr \phi^2$ as the only independent gauge-invariant local observable, as also found {\it e.g.} in \cite{Lozano:1999ji}. Polynomials in $u$ also represent $Q$-cohomology classes, of course. This leads to \eqref{HS3}.
\end{proof}

In higher rank, $\HVW (S^3)$ is spanned by invariant polynomials of $\phi$. In the opposite direction, it is also instructive to consider a version of the Proposition~\ref{prop:HS3} in Vafa-Witten theory with gauge group $G=U(1)$. The arguments are similar, but the $Q$-action on fields is much simpler than \eqref{BRST}. Namely, in abelian theory commutators vanish and $d_A$ becomes the ordinary exterior derivative:
\be
\begin{aligned}
	Q A & = \psi_1 \\
	Q \phi & = 0 \\
	Q \bar \phi & = \eta \\
	Q \eta & = 0 \\
	Q \psi_1 & = d \phi \\
	Q \chi^+_2 & = D^+_2 + s_2^+ \\
	Q D^+_2 & = 0
\end{aligned}
\qquad\qquad
\begin{aligned}
	Q B^+_2 & = \tilde \psi^+_2 \\
	Q \tilde \psi^+_2 & = 0 \\
	Q \tilde \chi_1 & = \tilde H_1 + s_1 \\
	Q \tilde H_1 & = 0 \\
	Q C & = \zeta \\
	Q \zeta & = 0
\end{aligned}
\label{BRSTabelian}
\ee
We quickly learn that in Vafa-Witten theory with $G=U(1)$ one also has $\HVW (S^3) = \CT^+_0$, just as \eqref{HS3} in the theory with $G=SU(2)$. Moreover, both of these agree with the Floer homology of $S^3$ and with the Heegaard Floer homology of $S^3$, {\it cf.} (\ref{HFbasic}a). This is not a coincidence and is a good illustration of a deeper set of relations between $\HVW (M_3)$ and the instanton Floer homology of the same 3-manifold that will be discussed further below.
These parallels with the instanton Floer homology will help us to better understand $\HVW (M_3)$ for $M_3 = \Sigma_g \times S^1$.

\subsection{Comparison to Floer homology}

The first column in \eqref{BRST} is precisely the $Q$-cohomology in Donaldson-Witten theory, with the same action of $Q$. This suggests that we should consider $\HVW (M_3)$ and $HF (M_3)$ in parallel, anticipating a general relation of the form
\be
HF (M_3) \subseteq \HVW (M_3)
\label{HFinHVW}
\ee
where the reduction of grading mod 8 is understood on the right-hand side. Clarifying the role of such relation, and understanding under what conditions it should be expected, is one of the motivations in the discussion below. It will help us to understand better the structure of $\HVW (M_3)$ for general $M_3$ and for $M_3 = \Sigma_g \times S^1$ in particular.

Recall, that the Floer homology $HF (\Sigma_g \times S^1)$ is isomorphic to the quantum cohomology of $\mathrm{Bun}_{G_C}$, the space of holomorphic bundles on $\Sigma_g$ (see {\it e.g.} \cite{MR1670396}): 
\be
HF^* (\Sigma_g \times S^1) \; \cong \; QH^* (\mathrm{Bun}_{G_C})
\label{HFQH}
\ee
which, in turn, is isomorphic to the space of flat $G$-connections, $\mathrm{Bun}_{G_C} \cong \CM_{\text{flat}} (\Sigma_g, G)$, according to the celebrated theorem of Narashimhan and Seshadri. More precisely, here by $\mathrm{Bun}_{G_C}$ we mean the space of bundles with fixed determinant, such that it is simply-connected for any $g$ and $\mathrm{Bun}_{G_C} = \mathrm{pt}$ when $g=1$. The Poincar\'e polynomial of $\mathrm{Bun}_{G_C} (\Sigma_g)$ is given by the Harder-Narasimhan formula:
\be
P (\mathrm{Bun}_{SL(2)} (\Sigma_g)) = \frac{(1 + t^3)^{2g} - t^{2g} (1+t)^{2g}}{(1-t^2) (1-t^4)}
\ee
If we wish to work with all bundles (rather than bundles with fixed determinant), which in gauge theory language corresponds to replacing $G=SU(N)$ by $G=U(N)$, then
\be
H^* (\mathrm{Bun}_{GL(N)} (\Sigma_g)) \cong H^* (T^{2g}) \otimes H^* (\mathrm{Bun}_{SL(N)} (\Sigma_g))
\ee

Returning to the quantum cohomology \eqref{HFQH} and the corresponding Floer homology, it has the following generators:
\begin{eqnarray}
\alpha \in HF^2 (\Sigma_g \times S^1) & \nonumber \\
\psi_i \in HF^3 (\Sigma_g \times S^1) & \quad 1 \le i \le 2g \label{HFggenerators} \\
\beta \in HF^4 (\Sigma_g \times S^1) & \nonumber
\end{eqnarray}
The action of $\mathrm{Diff} (\Sigma_g)$ on $HF^* (\Sigma_g \times S^1)$ factors through $Sp(2g,\Z)$ on $\psi_i$, so that the invariant part
\be
HF^* (\Sigma_g \times S^1)^{Sp(2g,\Z)} \; = \; \C [\alpha, \beta, \gamma] / J_g
\ee
is generated by $\alpha$, $\beta$, and the $Sp (2g,\Z)$-invariant combination
\be
\gamma = -2 \sum_{i=1}^g \psi_i \psi_{i+g}
\ee
Moreover,
\be
HF^* (\Sigma_g \times S^1) \; = \;
\bigoplus_{k=0}^g \Lambda_0^k HF^3 \otimes \C [\alpha, \beta, \gamma] / J_{g-k}
\ee
where $\Lambda_0^k HF^3 = \ker (\gamma^{g-k+1}: \Lambda^k HF^3 \to \Lambda^{2g-k+2} HF^3)$ is the primitive part of $\Lambda^k HF^3$ and the explicit description of $J_g$ can be found {\it e.g.} in \cite{MR1670396}.

Note, $p$-form observables in Donaldson-Witten theory correspond to cohomology classes of homological degree $4-p$. This can be understood as a consequence of the standard descent procedure,
\begin{eqnarray}
0 \; = \; i \{ Q, W_0 \} & \qquad , \qquad &
dW_0 \; = \; i \{ Q, W_1 \} \nonumber \\
dW_1 \; = \; i \{ Q, W_2 \} & \qquad , \qquad &
dW_2 \; = \; i \{ Q, W_3 \} \label{descentrels} \\
dW_3 \; = \; i \{ Q, W_4 \} & \qquad , \qquad &
dW_4 \; = \; 0 \nonumber
\end{eqnarray}
applied to the local observable $W_0 = u = \Tr \phi^2$ that we already met earlier in \eqref{HS3} and that also is a generator of $HF (S^3) \cong \HVW (S^3)$, {\it cf.} \eqref{HFinHVW}. Indeed, in the conventions such that the homological grading in \eqref{HFggenerators} is twice the $U(1)_t$ degree, $\beta = - 4u$ has $U(1)_t$ degree $2$ and the topological supercharge $Q$ carries $U(1)_t$ degree $+\frac{1}{2}$. In this conventions, $W_p$ constructed as in \eqref{descentrels} is a $p$-form on $M_4$ of $U(1)_t$ degree
\be
\deg_t (W_p) = \deg_t (W_0) - \frac{p}{2}
\ee
Therefore, integrating $W_p$ over a $p$-cycle $\gamma$ in $M_4$ we obtain a topological observable
with $U(1)_t$ grading $\deg_t (W_0) - \frac{p}{2}$,
\be
\mathcal{O}^{(\gamma)} \; := \; \int_{\gamma} W_{\dim (\gamma)}
\label{pobservable}
\ee
Since in the conventions used here the homological grading and $U(1)_t$ grading differ by a factor of 2, the homological degree of the observable $\mathcal{O}^{(\gamma)}$ is $2 \deg_t (W_0) - p$.

The relation \eqref{HFQH} has an analogue in the Vafa-Witten theory and can be understood in the general framework {\it a la} Atiyah-Floer. Indeed, the push-forward, or the ``fiber integration,'' of the 4d TQFT functor along $\Sigma_g$ gives a 2d TQFT, namely the A-model with target space given by the space of solutions to gauge theory PDEs on $\Sigma_g$. When this process, called topological reduction \cite{Bershadsky:1995vm}, is applied to the Donaldson-Witten theory it gives precisely A-model with target space $\mathrm{Bun}_{G_C}$. This is in excellent agreement with the Atiyah-Floer conjecture which, among other things, asserts that upon such fiber integration (or, topological reduction) a 3-manifold $M_3^b$ with boundary $\Sigma_g = \partial (M_3^b)$ defines a boundary condition (``brane'') $\CB (M_3^b)$ in the A-model with target space $\mathrm{Bun}_{G_C} (\Sigma_g)$. In this way, the instanton Floer homology of the Heegaard decomposition
\be
M_3 = M_3^+ \cup_{\Sigma_g} M_3^-
\label{Heegaard}
\ee
can be understood as the Lagrangian Fukaya-Floer homology of $\CB (M_3^+)$ and $\CB (M_3^-)$ in the ambient moduli space $\mathrm{Bun}_{G_C} \cong \CM_{\text{flat}} (\Sigma_g, G)$. In this relation, gauge instantons that provide a differential in the Floer complex become disk instantantons in the A-model on $\mathrm{Bun}_{G_C} (\Sigma_g)$.
Similarly, in the Vafa-Witten theory the space of solutions to the PDE on $\Sigma_g$, {\it i.e.} target space of the topological sigma-model, $\MVW (\Sigma_g, G)$, is the space of $\CE$-valued $G$-Higgs bundles on $\Sigma$ that we already encountered in section \ref{sec:eqVerlinde}. There are some notable differences, however.

Thus, one important novelty of the Vafa-Witten theory is that it has much larger configuation space (space of fields), larger (super)symmetry and, correspondingly, larger structure in the push-forward to the topological sigma-model along $\Sigma_g$. In particular, the resulting sigma-model has no disk instantons ({\it cf.} {\it e.g.} \cite[Lemma 15]{MR1949786}) since in the context of Vafa-Witten theory $\CB (M_3^+)$ and $\CB (M_3^-)$ are holomorphic Lagrangians submanifolds of $\MVW (\Sigma_g, G)$. This strongly suggests that the original 4d gauge theory also has no instantons that contribute to $\HVW (M_3)$.

This conclusion is also supported by the fact that on 
$\R \times M_3$ or on $S^1 \times M_3$ Vafa-Witten theory is equivalent to another, amphicheiral twist of $\CN=4$ super-Yang-Mills \cite{Geyer:2001yc} which is known to have no instantons \cite{Labastida:1997vq}. Based on all of these, we expect:

\begin{conjecture}\label{conj:noinstantons}
In Vafa-Witten theory on 3-manifolds, there are no (disk) instantons that contribute to $\HVW (M_3)$.
\end{conjecture}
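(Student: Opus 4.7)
The plan is two-pronged, formalizing the heuristic sketched in the paragraph preceding the conjecture. First, I would recast the problem via topological reduction along $\Sigma_g$ in a Heegaard splitting \eqref{Heegaard} $M_3 = M_3^+ \cup_{\Sigma_g} M_3^-$, reducing to a statement about pseudo-holomorphic disks between branes in a 2d A-model. Second, I would corroborate the conclusion through the known vanishing for the amphicheiral twist via \cite{Geyer:2001yc,Labastida:1997vq}. Concretely, pushing forward the 4d Vafa-Witten functor along $\Sigma_g$ yields, by the discussion in Section~\ref{sec:eqVerlinde}, a 2d A-model whose target is the hyperk\"ahler moduli space $\MVW(\Sigma_g,G)$ of $\CE$-valued $G$-Higgs bundles, and each handlebody $M_3^\pm$ contributes a brane $\CB(M_3^\pm) \subset \MVW$. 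In this Atiyah--Floer picture, the Fukaya--Floer differential counting $J$-holomorphic disks with boundary on $\CB(M_3^\pm)$ is precisely the instanton contribution to $\HVW(M_3)$ one wants to eliminate.

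The core step is to show that $\CB(M_3^\pm)$ are \emph{holomorphic Lagrangian} (complex Lagrangian) submanifolds of $\MVW$. I would establish this by analyzing the Vafa-Witten equations \eqref{VWeqs} on the half-cylinder $\R_{\geq 0} \times M_3^\pm$, requiring finite-action solutions that asymptote to translation-invariant ones at infinity. The boundary data on $\Sigma_g$ are then pinned to a locus cut out by a holomorphic moment map for one of the complex structures on $\MVW$, producing a complex submanifold that is simultaneously Lagrangian with respect to the holomorphic symplectic form in a second complex structure. Equivalently, one can verify this at the level of unbroken supercharges preserved by the cylinder-with-boundary geometry, reading off from Table~\ref{tab:symmetries} and the transformation rules \eqref{BRST} that they annihilate a holomorphic symplectic two-form on $\MVW$, which forces $\CB(M_3^\pm)$ to be of complex Lagrangian type.

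Once $\CB(M_3^\pm)$ is complex Lagrangian, the absence of disk instantons follows from the hyperk\"ahler Wirtinger identity, which I would formulate as the analogue of \cite[Lemma 15]{MR1949786}: for a pseudo-holomorphic disk $u \colon (D,\partial D) \to (\MVW, \CB)$, the area $\int_D u^*\omega$ for the A-model symplectic form $\omega$ equals a boundary integral of a local primitive on $\CB$, and this primitive can be taken from the K\"ahler potentials of the other complex structures since $\omega|_\CB \equiv 0$ by the Lagrangian condition. Positivity then forces $u$ to be constant, so the instanton differential is identically zero. The consistency check from the amphicheiral twist is essentially immediate: on cylindrical 4-manifolds \cite{Geyer:2001yc} identifies the Vafa-Witten theory with the Marcus/GL twist, for which \cite{Labastida:1997vq} proves that instantons do not contribute.

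The hardest step will be the second one --- pinning down the precise brane type of $\CB(M_3^\pm)$ --- because it requires a careful treatment of boundary conditions for the full Vafa-Witten system (not just its Donaldson-Witten sector), including the Higgs fields $B$, $C$, $\phi$, $\bar\phi$ and the behavior of their zero modes on $M_3^\pm$. A secondary obstacle is Gromov compactness in the non-compact hyperk\"ahler target $\MVW$; here the $U(1)_t$ and $U(1)_x \times U(1)_y$ equivariant symmetries emphasized in Section~\ref{sec:eqVerlinde} should be used to localize to compact fixed loci before invoking the Wirtinger argument. One must also check that, for $M_3$ with $H_1(M_3;\Z) \neq 0$ or for Heegaard splittings in which $\CB(M_3^+) \cap \CB(M_3^-)$ is not transverse, positive-dimensional components of flat $G_\C$ connections do not permit disk bubbling along the components; the equivariant framework again provides the needed control.
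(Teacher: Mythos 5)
The statement you are addressing is presented in the paper as Conjecture~\ref{conj:noinstantons}, and the paper does \emph{not} prove it: it offers precisely the two pieces of evidence you reproduce --- (i) the topological reduction along $\Sigma_g$ to an A-model on $\MVW(\Sigma_g,G)$ in which $\CB(M_3^\pm)$ are holomorphic Lagrangian submanifolds, so that disk instantons are excluded by the hyperk\"ahler Wirtinger-type argument of \cite[Lemma 15]{MR1949786}, and (ii) the equivalence on $\R\times M_3$ with the amphicheiral twist \cite{Geyer:2001yc}, which has no instantons by \cite{Labastida:1997vq} --- and then explicitly says ``Based on all of these, we expect.'' Your plan is therefore the same two-pronged approach as the paper's motivating discussion, fleshed out with more detail, and your identification of where the real work lies (the brane type of $\CB(M_3^\pm)$ for the full field content including $B$, $C$, $\phi$, $\bar\phi$; compactness in the non-compact target) is accurate.

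However, this remains a research program rather than a proof, and there is a gap more fundamental than the two obstacles you name. The passage from the 4d gauge theory to the 2d A-model via ``topological reduction'' along a Heegaard surface is itself an adiabatic-limit heuristic of Atiyah--Floer type, unproven even in the original Donaldson--Floer setting; so even granting that $\CB(M_3^\pm)$ are complex Lagrangian and that the sigma-model has no holomorphic disks, this does not by itself establish that the gauge-theoretic, $\R$-dependent solutions of \eqref{VWeqs} on $\R\times M_3$ --- which are what actually enter the differential on $\HVW(M_3)$ in \eqref{HVWviaQ} --- make no contribution. The amphicheiral-twist argument is likewise a physics-level equivalence of twisted theories, not a statement about moduli spaces of solutions to \eqref{VWeqs}. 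In short, your proposal faithfully expands the paper's own heuristic, but the statement remains a conjecture after your argument, just as it does in the paper; presenting it as a proof would overstate what either establishes.
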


In what follows we assume the validity of this conjecture which drastically simplifies the computation of the homology groups $\HVW (M_3)$. It basically means that the chain complex underlying this homology theory is obtained by restricting the original configuration space to the space of fields on $M_3$, with the differential induced by the action of $Q$ in \eqref{BRST}, justifying \eqref{HVWviaQ}.

As it often happens in topological sigma-models, the action of $Q$ can be interpreted geometrically as the suitable differential acting on the differential forms on the target manifold. In the context of Vafa-Witten theory, where the target space $\MVW (\Sigma_g, G)$ is the space of $\CE$-valued $G$-Higgs bundles on $\Sigma$, this also clarifies and further justifies the claim in Conjecture~\ref{conj:infdimnl}. Namely, from the cohomological perspective discussed here, the infinite-dimensionality of $\HVW (M_3)$ is attributed to the non-compactness of $\MVW (\Sigma_g, G)$. Just like the space of holomorphic functions on $\C$ is infinite-dimensional, the non-compactness of $\MVW (\Sigma_g, G)$ leads to $\dim \HVW (M_3) = \infty$ for any $M_3$. This analogy is, in fact, realized in the Vafa-Witten theory with gauge group $G=U(1)$ that we already briefly discussed around eq.\eqref{BRSTabelian} in this section.

The computation of $\HVW (M_3)$ for $M_3 = \Sigma_g \times S^1$ is similar to the computation of \eqref{HS3}, except many other fields besides $\phi$ have zero-modes on $M_3$ and contribute to $\HVW (M_3)$. Equivalently, as explained around Figure~\ref{fig:S2ops}, $p$-form observables in the original theory integrated over $p$-cycles in $M_3$ give rise to non-trivial $Q$-cohomology classes, {\it cf.} \eqref{pobservable}.
The counting of zero-modes is especially simple in the case of abelian theory, on which more general consideration can be modelled.
For example, when $G=U(1)$, 1-form observables include $\psi_1$, which can be integrated over 1-cycles and give $2g+1$ Grassmann (odd) zero-modes on $M_3 = \Sigma_g \times S^1$. More precisely, the zero-modes of $A$ parametrize $\CM_{\text{flat}} (\Sigma_g, U(1)) = \mathrm{Jac} (\Sigma_g) \cong T^{2g}$, the fields $\phi$, $\bar \phi$, $C$, $\eta$, and $\zeta$ have one zero-mode each, while each of the remaining fields has $2g+1$ zero-modes, modulo constraints (field equations). Altogether, these modes parametrize\footnote{Notice that bosonic (even) and Grassmann (odd) dimensions of this superspace are equal; this is a consequence of the fact that Vafa-Witten theory is {\it balanced} \cite{Dijkgraaf:1996tz}. The bosonic (even) part of this superspace is $\C^2 \times \CM_{\text{flat}} (M_3, U(1)_{\C})$, and the same expression holds for general 3-manifold $M_3$.}
\be
\MVW (\Sigma_g, U(1)) \; = \; T^* \mathrm{Jac} (\Sigma_g) \times \Pi \C \times \Pi \C \times \left( \C \times \Pi \C^g \right) \times \left( \C \times \Pi \C^g \right)
\label{MVWabelian}
\ee
where $\Pi \C^n$ represents Grassmann (odd) space and contributes to $Q$-cohomology a tensor product of $n$ copies of the fermionic Fock space, $\CF = \Lambda^* [\xi] \cong H^* (\cp^1)$. In comparison, each copy of $\C$ in the moduli space \eqref{MVWabelian} contributes to the $Q$-cohomology a factor of $\CT^+$, the Fock space of a single boson described in \eqref{Ttower} and illustrated in Figure~\ref{fig:spectrum}.

The result \eqref{MVWabelian} agrees with the analysis in section~\ref{sec:eqVerlinde}, where it can be understood as the moduli space of $\CE$-valued $G$-Higgs bundles on $\Sigma$, with $G=U(1)$. 
Indeed, a Higgs field on $\Sigma$ with $R$-charge $R$ ({\it cf.} \eqref{EVW}) contributes $H^0 (K_{\Sigma}^{R/2})$ bosonic zero-modes and $H^0 (K_{\Sigma}^{1-R/2})$ fermionic zero-modes, all valued in $\mathrm{Lie} (G)$. In the case of Vafa-Witten theory we have three Higgs fields with $R=2$, 0, and 0, respectively, which leads precisely to \eqref{MVWabelian}. Furthermore, in the notations of section~\ref{sec:eqVerlinde}, the symmetry $U(1)_x$ acts on the fiber of $T^* \mathrm{Jac} (\Sigma)$ and one of the $\Pi \C$ factors in \eqref{MVWabelian}, whereas symmetries $U(1)_y$ and $U(1)_t$ each act on the corresponding copy of $\left( \C \times \Pi \C^g \right)$ in \eqref{MVWabelian}.
For a non-abelian $G$, the analysis is similar, though $\MVW (\Sigma_g, G)$ is no longer a product {\it a la} \eqref{MVWabelian}; $T^* \mathrm{Jac} (\Sigma)$ is replaced by $\Hom \left( \pi_1 (\Sigma) , G_{\C} \right)$ and each copy of $\C$ (resp. $\Pi \C$) is replaced by $\mathfrak{g}_{\C}$ (resp. $\Pi \mathfrak{g}_{\C}$), subject to the constraints and gauge transformations that act simultaneously on all of the factors.

Note, unlike \eqref{HS3}, the cohomology of \eqref{MVWabelian} and its non-abelian generalization $\HVW (\Sigma_g \times S^1)$ transform non-trivially under the modular $SL(2,\Z)$ action discussed in section~\ref{sec:eqVerlinde}. In particular, the $S$ element of $SL(2,\Z)$ acts on $\mathrm{Jac} (\Sigma_g)$ as the Fourier-Mukai transform.

As already noted earlier, the spectrum of fields and the (super)symmetry algebra in the Vafa-Witten theory is much larger compared to what one finds in the Donaldson-Witten theory, {\it cf.} \eqref{HFinHVW}. As a result, there is more structure in $Q$-cohomology of Vafa-Witten theory, to which we turn next.

\subsection{Differentials and spectral sequences}

There are two ways in which spectral sequences typically arise in a cohomological TQFT: one can change the differential $Q$ while keeping the theory intact, or one can deform the theory. (See \cite{Gukov:2015gmm} for an extensive discussion and realizations in various dimensions.)
In the first case, we obtain a different cohomological invariant in the same theory, whereas in the latter case we obtain a relation between cohomological invariants of two different theories.

In the present context of Vafa-Witten theory, a natural class of deformations consists of relevant deformations that, in a physical theory, initiate RG flows to new conformal fixed points. If we want the resulting SCFT to allow a topological twist on general 4-manifolds, we need to preserve at least $\CN=2$ supersymmetry of the physical theory and the RG-flow. (This condition is necessary, but may not be sufficient; there can be further constraints.) In such a scenario, one can expect the following:

\begin{conjecture}\label{conj:RGflows}
A 4d $\CN=2$ theory that can be reached from 4d $\CN=4$ super-Yang-Mills via an RG-flow leads to a spectral sequence that starts with $\HVW (M_3)$ and converges to Floer-like homology in that $\CN=2$ theory.
\end{conjecture}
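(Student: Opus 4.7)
The plan is to realize the RG flow as a continuous deformation of the BRST operator on a common off-shell field content, and then to filter the deformed complex by a grading that survives the flow, so that the resulting spectral sequence has $E_1 \cong \HVW(M_3)$ and $E_\infty$ equal to the Floer-like homology of the target $\CN=2$ theory. Concretely, I would pick a relevant, $\CN=2$-preserving operator $\mathcal{O}$ whose flow terminates at the desired $\CN=2$ SCFT, topologically twist both endpoints of the flow using compatible embeddings of the twisted $U(1)_t$ into the $\CN=4$ and $\CN=2$ $R$-symmetries (as in Table~\ref{tab:symmetries}), and write the deformed BRST operator as $Q' = Q + \delta Q$ on $\R \times M_3$. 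Nilpotency of $Q'$ (modulo gauge) is guaranteed by the supersymmetry of the perturbed action, and the difference between the $U(1)_t$ ghost number and its $\CN=2$ counterpart along the flow provides the grading that filters $\delta Q$.

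Next, I would set up the spectral sequence associated to this filtration in the standard way. The $E_0$ page is the Vafa-Witten chain complex with only the filtration-preserving piece of $Q'$ acting, so that by definition $E_1^{p,q} \cong \HVW^{p+q}(M_3)$ in an appropriate bigrading (and, under Conjecture~\ref{conj:noinstantons}, is computed purely from the zero-mode analysis described in section~\ref{sec:Qcohomology}). The higher differentials $d_r$ on $E_r$ are assembled from the components of $\delta Q$ that shift the filtration by $r$, and $Q'^2 = 0$ forces the $d_r$-identities automatically. Because the relevant deformation is bounded below in filtration degree by reflection positivity on $\R \times M_3$, standard filtered-complex arguments (an exhaustive, complete, bounded-below filtration satisfying Mittag-Leffler) give convergence $E_\infty \Rightarrow H^*(Q')$, where the right-hand side is the Floer-like homology of the twisted $\CN=2$ theory on $\R \times M_3$. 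Compatibility with the $\mathrm{MCG}(M_3)$-action of \eqref{MCGaction} on each page is automatic because the filtration is defined by an internal $R$-symmetry grading.

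The hard part will be making the off-shell field content of the conjecture rigorous: for a generic interacting $\CN=2$ SCFT reached from $\CN=4$ by RG flow, the infrared degrees of freedom need not be visible as a Lagrangian deformation of the ultraviolet theory, so the single complex carrying both $Q$ and $Q'$ may not exist in a strict field-theoretic sense. I would first verify the conjecture in the restricted class of flows where the deformation is explicit and Lagrangian, namely the mass-deformed theory $\CN=2^*$ and superpotential deformations of the adjoint hypermultiplets, and then attempt to propagate the statement along $S$-duality orbits in the space of $\CN=2$ theories. A secondary obstacle is that $\HVW(M_3)$ is infinite-dimensional (Conjecture~\ref{conj:infdimnl}), so one must verify that the filtration truncates each bidegree of $E_r$ to something finite-dimensional in order to rule out pathological pages; this is where the role of the equivariant parameters $(x,y,t)$ introduced in section~\ref{sec:eqVerlinde} becomes essential, as they provide the regularization that makes the spectral sequence well-posed term by term.
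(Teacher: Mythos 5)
The statement you are addressing is stated in the paper as a \emph{conjecture}, and the paper offers no proof of it --- only motivating discussion, chiefly the example of the $\CN=2^*$ mass deformation, which makes the fields in the right column of \eqref{BRST} massive and flows to Donaldson--Witten theory. So there is no proof of the paper's to compare against; what you have written is a strategy for an open problem, and it has genuine gaps beyond the ones you flag. First, the paper's own discussion immediately preceding the conjecture identifies an obstruction your setup does not accommodate: the relevant operator $\mathcal{O}$ that triggers the flow may transform nontrivially under the subgroup of $SO(6)_R$ that becomes the $R$-symmetry of the infrared $\CN=2$ theory, so that after the topological twist it is no longer a scalar on $M_4$ and the deformation requires a choice of additional structure on $M_3$. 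Your construction of $Q' = Q + \delta Q$ on a common twisted complex, filtered by an internal $R$-charge, presupposes exactly the situation ($\delta Q$ a 0-form of definite $U(1)_t$ weight) that the paper warns may fail outside the $\CN=2^*$ case.

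Second, the step where you identify $H^*(Q')$ with the Floer-like homology of the infrared $\CN=2$ theory is not a consequence of your filtration argument --- it \emph{is} the conjecture. It requires (i) that $Q$-cohomology is invariant along the RG flow and (ii) that the infrared invariant is computed by the deformed ultraviolet complex at all; for a non-Lagrangian fixed point there is no such complex, as you note, but your proposed remedy (verify for $\CN=2^*$, then propagate along duality orbits) collapses the argument back to the single example the paper already treats, plus an unproven duality-covariance assumption. Finally, the convergence argument is not sound as stated: ``reflection positivity'' does not bound the filtration degree of $\delta Q$ from below (definiteness of the $R$-charge of $\mathcal{O}$ would, when it holds), and since each graded piece of $\HVW(M_3)$ is infinite-dimensional by Conjecture~\ref{conj:infdimnl}, Mittag--Leffler-type conditions must be checked rather than invoked; the equivariant parameters $(x,y,t)$ cannot supply the needed finiteness for general $M_3$, because the paper requires $x=y=1$ away from $M_3 = S^1\times\Sigma$. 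What survives of your proposal is a reasonable and essentially standard template --- deform $Q$, filter, run the spectral sequence --- which is consistent with how the paper frames the conjecture, but none of the three load-bearing steps (existence of the deformed complex, identification of the abutment, convergence) is established.
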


One interesting feature of spectral sequences induced by RG-flows is that a local relevant operator $\mathcal{O}$ that triggers the flow may transform non-trivially under the subgroup of $SO(6)_R$ that becomes the $R$-symmetry of the 4d $\CN=2$ SCFT. If so, under the topological twist it may no longer remain a scalar (a 0-form) on $M_4$, thus requiring a choice of additional structure. For example, a mass deformation to $\CN=2^*$ theory, already considered in \cite{Vafa:1994tf,Labastida:1997xk}, makes all the fields in the right column of \eqref{BRST} massive\footnote{At the level of $Q$-cohomology, it modifies the right-hand side in \eqref{BRST}.} and does not require any additional choices or structures on $M_3$ or $M_4$. It initiates an RG-flow to 4d $\CN=2$ super-Yang-Mills, whose topologically twisted version is the Donaldson-Witten theory, providing another perspective on the connection between these two topological theories, {\it cf.} \eqref{HFinHVW}.

Now, let us consider the other mechanism that leads to spectral sequences, in which the theory remains unchanged, but the definition of $Q$ changes. This is only possible if a theory admits more than one BRST operator (scalar supercharge) that squares to zero. Luckily, the Vafa-Witten theory is a good example; in addition to the original differential $Q$ it has the second differential, $Q'$, that acts as follows
\be
\begin{aligned}
	Q' A & = \tilde \chi_1 \\
	Q' \phi & = \zeta \\
	Q' \bar \phi & = 0 \\
	Q' \eta & = i [C, \bar \phi] \\
	Q' \psi_1 & = - \tilde H_1 - s_1 + d_A C \\
	Q' \chi^+_2 & = i [B^+_2 , \bar \phi] \\
	Q D^+_2 & = i [\chi^+_2, \phi] - Q s_2^+
\end{aligned}
\qquad\qquad
\begin{aligned}
	Q' B^+_2 & = \chi^+_2 \\
	Q \tilde \psi^+_2 & =  \\
	Q' \tilde \chi_1 & = - d_A \bar \phi \\
	Q \tilde H_1 & = i [\tilde \chi_1 , \phi] - Q s_1 \\
	Q' C & = - \eta \\
	Q' \zeta & = i [\bar \phi , \phi]
\end{aligned}
\label{BRSTprime}
\ee

Since Vafa-Witten theory on a general 4-manifold has $SU(2)_R$ symmetry, under which $Q$ and $Q'$ transform as a two-dimensional representation, ${\bf 2}$, it is convenient to write the action of $Q$ and $Q'$ in a way that makes this symmetry manifest.
Therefore, we introduce $Q^a = ( Q, Q' )$, where $a=1,2$.
Similarly, we can combine all odd (Grassmann) fields of the Vafa-Witten theory into three $SU(2)_R$ doublets:
$\zeta$ and $\eta$ into a doublet of 0-forms $\eta^a$,
$\psi_1$ and $\tilde \chi_1$ into a doublet of 1-forms $\psi_1^a$,
$\tilde \psi_2^+$ and $\chi_2^+$ into a doublet of self-dual 2-forms $\chi_2^a$.

The bosonic (or, even) fields of the Vafa-Witten theory likewise combine into a triplet of 0-forms $\phi^{ab} = (\phi, C, \bar \phi)$, and the rest are $SU(2)_R$ singlets: $A$, $D_2^+$, $B_2^+$, and $\tilde H_1$. Then, the action of the BRST differentials \eqref{BRST} and \eqref{BRSTprime} can be written in a more compact form:
\be
\begin{aligned}
	Q^a A & = \psi_1^a \\
	Q^a \phi^{bc} & = \tfrac{1}{2} \epsilon^{ab} \eta^c + \frac{1}{2} \epsilon^{ac} \eta^b \\
	Q^a \psi_1^b & = d_A \phi^{ab} + \epsilon^{ab} \tilde H_1 \\
	Q^a \chi^b_2 & = [B_2^+, \phi^{ab}] + \epsilon^{ab} G_2^+
\end{aligned}
\qquad\qquad
\begin{aligned}
	Q^a B^+_2 & = \chi_2^a \\
	Q^a \eta^b & = - \epsilon_{cd} [\phi^{ac}, \phi^{bd}] \\
	Q^a \tilde H_1 & = - \tfrac{1}{2} d_A \eta^a - \epsilon_{cd} [\phi^{ac} , \psi_1^d] \\
	Q^a G_2^+ & = - \tfrac{1}{2} [B_2^+ , \eta^a] - \epsilon_{bc} [\phi^{ab} , \chi_2^c]
\end{aligned}
\label{scalarQ}
\ee
where $\epsilon_{ab}$ is the invariant tensor of $SU(2)_R$, and we use the conventions $\epsilon_{12} = 1$, $\epsilon^{ac} \epsilon_{cb} = - \delta^a_b$, $\varphi_a = \varphi^b \epsilon_{ba}$, $\varphi^a = \epsilon^{ab} \varphi_b$.

In addition to the differentials $Q^a = ( Q, Q' )$, the Vafa-Witten theory also has a doublet of vector supercharges (described in detail in Appendix~\ref{sec:appendix}).
On a 4-manifold of the form \eqref{M4RM3} they produce another doublet of BRST differentials, $\bar Q^a$, that are scalars with respect to the holonomy group of $M_3$.
Altogether, the total number of scalar supercharges is $N_T = 4$, as noted earlier {\it e.g.} in Table~\ref{tab:symmetries}.
In order to write their action on fields in Vafa-Witten theory, it is convenient to describe the latter as forms on $M_3$.

Since $\Omega^0 (\R \times M_3) \cong \Omega^0 (M_3)$, all 0-forms (scalars) remain 0-forms on $M_3$.
In other words, $\phi^{ab}$ and $\eta^a$ are not affected by the reduction to $M_3$.
In the case of 1-forms, we have $\Omega^1 (\R \times M_3) \cong \Omega^1 (M_3) \oplus \Omega^0 (M_3)$, and so $A$, $\tilde H_1$, $\psi_1^a$ produce additional 0-forms on $M_3$ that, following \cite{Geyer:2001yc}, we denote $\rho$, $Y$, and $\bar \eta^a$, respectively.
Finally, using $\Omega^{2,+} (\R \times M_3) \cong \Omega^1 (M_3)$, we can replace the self-dual forms $B_2^+$, $G_2^+$, $\chi_2^a$ by 1-forms on $M_3$: $V_1$, $\bar B_1$, and $\bar \psi_1^a$, respectively. It is also convenient to denote $\tilde H_1 + [V_1 , \rho] = B_1$.
Then, the action of all four BRST operators can be written as
\be
\begin{aligned}
Q^a A & = \psi_1^a \\
Q^a \phi^{bc} & = \tfrac{1}{2} \epsilon^{ab} \eta^c + \tfrac{1}{2} \epsilon^{ac} \eta^b \\
Q^a \eta^b & = - \epsilon_{cd} [\phi^{ac} , \phi^{bd}] \\
Q^a \psi_1^b & = d_A \phi^{ab} - \epsilon^{ab} [V_1 , \rho] + \epsilon^{ab} B_1 \\
Q^a B_1 & = -\tfrac{1}{2} d_A \eta^a + \tfrac{1}{2} [V , \bar \eta^a ] \\
& ~- \epsilon_{cd} [\phi^{ac} , \psi_1^d] - [\rho , \bar \psi_1^a ] \\
Q^a V_1 & = \bar \psi_1^a \\
Q^a \rho & = \tfrac{1}{2} \bar \eta^a \\
Q^a \bar \eta^b & = 2 [\rho , \phi^{ab}] + \epsilon^{ab} Y \\
Q^a \bar \psi^b_1 & = [V_1 , \phi^{ab}] + \epsilon^{ab} d_A \rho + \epsilon^{ab} \bar B_1 \\
Q^a \bar B_1 & = - \tfrac{1}{2} d_A \bar \eta^a - \tfrac{1}{2} [V_1 , \eta^a] \\
& ~- \epsilon_{cd} [\phi^{ac} , \bar \psi_1^d ] + [\rho , \psi^a_1] \\
Q^a Y & = - [\rho , \eta^a] - \epsilon_{cd} [\phi^{ac} , \bar \eta^d]
\end{aligned}
\qquad\qquad
\begin{aligned}
\bar Q^a A & = \bar \psi_1^a \\
\bar Q^a \phi^{bc} & = \tfrac{1}{2} \epsilon^{ab} \bar \eta^c + \tfrac{1}{2} \epsilon^{ac} \bar \eta^b \\
\bar Q^a \bar \eta^b & = - \epsilon_{cd} [\phi^{ac} , \phi^{bd}] \\
\bar Q^a \bar \psi_1^b & = d_A \phi^{ab} - \epsilon^{ab} [V_1 , \rho] - \epsilon^{ab} B_1 \\
\bar Q^a B_1 & = \tfrac{1}{2} d_A \bar \eta^a + \tfrac{1}{2} [V , \eta^a ] \\
& ~+ \epsilon_{cd} [\phi^{ac} , \bar \psi_1^d] - [\rho , \psi_1^a ] \\
\bar Q^a V_1 & = - \psi_1^a \\
\bar Q^a \rho & = - \tfrac{1}{2} \eta^a \\
\bar Q^a \eta^b & = - 2 [\rho , \phi^{ab}] - \epsilon^{ab} Y \\
\bar Q^a \psi^b_1 & = - [V_1 , \phi^{ab}] - \epsilon^{ab} d_A \rho + \epsilon^{ab} \bar B_1 \\
\bar Q^a \bar B_1 & = - \tfrac{1}{2} d_A \eta^a + \tfrac{1}{2} [V_1 , \bar \eta^a] \\
& ~- \epsilon_{cd} [\phi^{ac} , \psi_1^d ] - [\rho , \bar \psi^a_1] \\
\bar Q^a Y & = - [\rho , \bar \eta^a] + \epsilon_{cd} [\phi^{ac} , \eta^d]
\end{aligned}
\label{allQ}
\ee
Modulo gauge transformations, these operators obey
\be
\{ Q^a , Q^b \} = 0
\,, \qquad
\{ Q^a , \bar Q^b \} = - \epsilon^{ab} H
\,, \qquad
\{ \bar Q^a , \bar Q^b \} = 0
\ee
where the ``Hamiltonian'' $H$ generates translations along $\R$ in $M_4 = \R \times M_3$, {\it cf.} \eqref{M4RM3}.

Note, this algebra has the same structure as the familiar 2d $\CN=(2,2)$ supersymmetry algebra \cite{MR2003030}; namely, it is the 1d version of this superalgebra obtained by reduction to supersymmetric quantum mechanics. Along the same lines, the subalgebra generated by $Q^a$ should be compared to the 2d $\CN=(0,2)$ supersymmetry algebra.
Only this subalgebra is relevant to defining homological invariants of 3-manifolds that extend to a four-dimensional TQFT-like structure. However, from a purely three-dimensional perspective, one might consider a more general combination of BRST differentials
\be
d = s_a Q^a + r_b \bar Q^b
\ee
Then, a simple calculation gives
\be
d^2 = - \epsilon^{ab} s_a r_b H = (s_2 r_1 - s_1 r_2) H
\ee
and the vanishing of the right-hand side defines a quadric in the $\cp^3$, parametrized by $(s_a , r_a)$ modulo the overall scale, {\it cf.} \cite{Gukov:2015gmm}. It is $S^2 \times S^2$, which therefore is the space of possible choices of the BRST differential in the Vafa-Witten theory on a general 3-manifold. It would be interesting to analyze further how the computation of $Q$-cohomology varies over $S^2 \times S^2$, if at all.

\section{Applications and future directions}
\label{sec:final}

We conclude with a brief discussion of various potential applications of $\HVW (M_3)$ and directions for future work.

\parpic[r]{
	\begin{minipage}{40mm}
		\centering
		\includegraphics[scale=2.3,trim={0.2cm 0.2cm 2.4cm 0.2cm},clip]{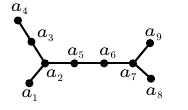}
		\captionof{figure}{An example of a plumbing graph.}
		\label{fig:plumbing}
	\end{minipage}%
}
\addtocontents{toc}{\protect\setcounter{tocdepth}{1}}
\subsection*{More general 3-manifolds}

One clear goal that also motivated this work is the computation of $\HVW (M_3)$ for more general 3-manifolds. The infinite family considered in this paper is part of a more general family of Seifert 3-manifolds which, in turn, belongs to a larger family of plumbed 3-manifolds. The latter can be conveniently described by a combinatorial data, as illustrated in Figure~\ref{fig:plumbing}, and therefore it would be nice to formulate $\HVW (M_3)$ directly in terms of such combinatorial data.

\subsection*{Mapping tori}

In general, there could be various ways of defining $\HVW (M_3)$, but all such versions should enjoy the action of the mapping class group $\mathrm{MCG} (M_3)$, {\it cf.} \eqref{MCGaction}. Studying this action more systematically and computing the invariants of the mapping tori of $M_3$,
\be
M_4 = \frac{M_3 \times I}{(x,0) \sim (\varphi (x), 1)}
\ee
as $\Tr_{\HVW (M_3)} \varphi$ could help to identify the particular definition of $\HVW (M_3)$ that matches the existing techniques of computing 4-manifold invariants $\ZVW (M_4)$. This could have important implications for understanding the functoriality in the Vafa-Witten theory, and developing the corresponding TQFT-like structure.

\subsection*{Trisections}

Another construction of 4-manifolds that has close ties with the examples of this paper is based on decomposing $M_4$ into three basic pieces along a genus-$g$ central surface $\Sigma_g$. Such trisections, analogous to Heegaard decompositions in dimension 3, allow to construct an arbitrary smooth 4-manifold and the initial steps of the corresponding computation of $\ZVW (M_4)$ based on this technique were discussed in \cite{MR4047476}.

\subsection*{Variants}

It is standard in gauge theory that, depending on how certain analytical details are treated, one can obtain different versions of the homological invariant. For example, the standard definition of the $Q$-cohomology in Donaldson-Witten and in Vafa-Witten theories leads to $HF (M_3)$ and $\HVW (M_3)$, such that both are isomorphic to $\CT^+$ for $M_3 = S^3$, in particular illustrating \eqref{HFinHVW}.
This should not be confused with a finite-dimensional version of the instanton Floer homology, $I_* (M_3)$, such that {\it e.g.} for the Poincar\'e sphere $P = \Sigma (2,3,5)$:
\be
I_n (P) \; = \;
\begin{cases}
\Z , & \text{if } n=0,4 \\
0 , & \text{otherwise}
\end{cases}
\label{IPoincare}
\ee
Here, $n$ is the mod 8 grading, and the two generators in degree $n=0$ and 4 correspond to the two irreducible representations $\pi_1 (P) \to SU(2)$. (See {\it e.g.} \cite{MR1051101} for more details.) Another variant is the framed instanton homology, $I^{\#} (M_3) := I (M_3 \# T^3)$, that was already mentioned in \eqref{framedinstanton}.

Similarly, in the context of the Vafa-Witten theory the computation of $Q$-cohomology naturally leads to $\HVW (M_3)$, which is expected to be infinite-dimensional for any 3-manifold, {\it cf.} Conjecture~\ref{conj:infdimnl}. In particular, as Proposition~\ref{prop:HS3} illustrates, we expect an isolated reducible representation $\pi_1 (M_3) \to SL(2,\C)$ to contribute to $\HVW (M_3)$ a tower of states $\CT^+$, as in (\ref{HFbasic}a). Moreover, the discussion in section \ref{sec:Qcohomology} makes it clear that isolated irreducible representations $\pi_1 (M_3) \to SL(2,\C)$ (modulo conjugation) also contribute to $\HVW (M_3)$, though their contribution may be finite-dimensional, as it also happens in the monopole Floer homology or $HF^+ (M_3)$.

For example, $M_3 = \Sigma (2,3,7)$ has a total of four $SL(2,\C)$ flat connections, one of which is trivial, while the other three correspond to irreducible representations $\pi_1 (M_3) \to SL(2,\C)$, modulo conjugation. Therefore, we expect four contributions to $\HVW (\Sigma (2,3,7))$: one copy of $\CT^+$ as in the case of $M_3 = S^3$, and three finite-dimensional contributions due to irreducible $SL(2,\C)$ flat connections.
The instanton homology $I_* (\Sigma (2,3,7))$ has a similar structure, except that it does not have a contribution of a trivial flat connection --- something we already saw in \eqref{IPoincare} --- and instead of three contributions from irreducible flat connections it has only two:
\be
I_n (\Sigma (2,3,7)) \; = \;
\begin{cases}
\Z , & \text{if } n=2,6 \\
0 , & \text{otherwise}
\end{cases}
\ee
The reason for this is that two out of three irreducible $SL(2,\C)$ flat connections can be conjugated to $SU(2)$, {\it i.e.} they correspond to irreducible representations $\pi_1 (M_3) \to SU(2)$, whereas the last one is not.
On the other hand, much like $\HVW (M_3)$, a sheaf-theoretic model for $SL(2,\C)$ Floer homology \cite{MR4167016} receives contributions from all three irreducible flat connections on $M_3 = \Sigma (2,3,7)$,
\be
HP^* (\Sigma (2,3,7)) = \Z_{(0)} \oplus \Z_{(0)} \oplus \Z_{(0)}
\ee
and a similar consideration for other Brieskorn spheres suggests that, at least in this family, we may expect an isomorphism
\be
\HVW^* (\Sigma (p,q,r)) \; \stackrel{?}{\cong} \; \CT^+ \oplus HP^* (\Sigma (p,q,r))
\ee
As a natural direction for future work, it would be interesting to either prove or disprove this relation (and, in the former case, generalize to more general 3-manifolds, such as plumbings mentioned earlier, {\it cf.} Figure~\ref{fig:plumbing}).

\subsection*{Towards surgery formulae in Vafa-Witten theory}

There are two types of surgery formulae that we can consider: surgeries in three dimensions relevant to the computation of $\HVW (M_3)$, and surgeries in four dimensions that produce $\ZVW (M_4)$ via cutting-and-gluing along 3-manifolds.

The infinite family of 3-manifolds considered in this paper has some direct connections to notable surgery operations in four dimensions. Thus, $M_3 = S^2 \times S^1$ is relevant to the Gluck twist, whereas $M_3 = T^3$ is relevant to knot surgery and the log-transform. All these surgery operations consist of cutting a 4-manifold along the corresponding $M_3$ and then re-gluing pieces back in a new way, or gluing in new four-dimensional pieces with the same boundary $M_3$. This again requires understanding how a given element of the mapping class group, $\varphi \in \mathrm{MCG} (M_3)$, acts on $\HVW (M_3)$, {\it cf.} \eqref{MCGaction}.
For example, our preliminary analysis indicates that the Gluck involution that generates $\Z_2 \subset \mathrm{MCG} (S^2 \times S^1)$, associated with $\pi_1 (SO(3)) = \Z_2$, acts trivially on $\HVW (S^2 \times S^1)$.
This implies that $\ZVW (M_4, G)$ can not detect the Gluck twist when $\pi_1 (G) = 1$. (The last condition appears in view of Remark~\ref{rem:decoration}.)
We plan to return to this in future work and also analyze other important elements of $\mathrm{MCG} (M_3)$ acting on $\HVW (M_3)$.

In the case of gluing along $M_3 = T^3$, applying \eqref{VWKahler} to a family of elliptic fibrations $M_4 = E(n)$ with $\chi = 12n$, $\sigma = - 8n$, and $n-1$ basic SW classes, we quickly learn that \eqref{VWKahler} can not be consistent with a simple multiplicative gluing formula {\it a la} \cite{MR1492130,MR1833751}. Indeed, representing $E(n)$ as an iterated fiber sum, we have
\be
E(n) \; = \; \big( E(n-2) \setminus N_F \big) \cup_{T^3} \big( E(2) \setminus N_F \big)
\label{Enfibersum}
\ee
where $N_F \cong T^2 \times D^2$ is a neighborhood of a generic fiber. Then, assuming {\it e.g.} $n = \text{even}$, a simple multiplicative gluing formula along $M_3 = T^3$ would imply
\be
\ZVW (E(n)) = \left( \frac{\ZVW (E(4))}{\ZVW (E(2))} \right)^{\frac{n-2}{2}} \ZVW (E(2))
\label{Enproduct}
\ee
which is not the case:

\begin{proposition}
\label{prop:En}
For $M_4 = E(n)$ with $n = \text{even}$, \eqref{VWKahler} gives
\be
\label{ZVWEnshort}
\ZVW (E(n)) \; = \;
\begin{cases}
(-1)^{\frac{n}{2} + 1}
{n -2 \choose \frac{n}{2} -1}
\frac{1}{2}
\left( \frac{G(q^2)}{4} \right)^{\frac{n}{2}} , & \text{if } \; n > 2 \\
\frac{1}{8} G(q^2) + 
\frac{1}{4} G(q^{1/2}) + 
\frac{1}{4} G(-q^{1/2}) , & \text{if } \; n=2
\end{cases}
\ee
where
$$
G(q) = \frac{1}{\eta^{24}} = \frac{(2\pi)^{12}}{\Delta (\tau)}
= \frac{1}{q} \Big(
1
+24 q
+324 q^2
+3200 q^3
+25650 q^4
+ \ldots \Big)
$$
\end{proposition}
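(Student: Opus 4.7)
The plan is to apply the universal K\"ahler formula \eqref{VWKahler} directly to $M_4 = E(n)$ and simplify term by term. Three simplifications do most of the work. First, the topological invariants $\chi(E(n)) = 12n$, $\sigma(E(n)) = -8n$, $b_1(E(n)) = 0$ yield $(\chi+\sigma)/4 = n$, $(\chi+\sigma)/8 = n/2$, and, crucially, $-2\chi - 3\sigma = 0$; the last identity collapses every $(\theta_0/\eta^2)^{-2\chi-3\sigma}$ factor (and its two shifted variants appearing in the second and third brackets) to $1$. Second, the Fintushel-Stern theorem describes the Seiberg-Witten basic classes as $K - 2jF = (n-2-2j)F$ for $0 \le j \le n-2$, with $\mathrm{SW}(K - 2jF) = (-1)^j \binom{n-2}{j}$; because $F\cdot F = 0$, every basic class satisfies $x\cdot x = 0$, so the remaining $(\theta_1/\theta_0)^{-x'\cdot x'}$ and analogous powers also trivialize. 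After these simplifications each of the three brackets in \eqref{VWKahler} is independent of the basic class $x$ except through the sign factors $\delta_{v,[x']}$ and $(-1)^{[x']\cdot v}$.

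For $n > 2$ even, the second and third brackets carry no Kronecker constraint, so their contribution is proportional to $\sum_{j=0}^{n-2}(-1)^j \binom{n-2}{j} = 0$ and both brackets vanish identically. Only the first bracket survives, with the overall sign $(-1)^{(\chi+\sigma)/4} = (-1)^n = 1$. The key step is then to verify that, for the appropriate choice of 't~Hooft flux $v$, the Kronecker $\delta_{v,[x']}$ selects precisely the central basic class $x = 0$ (namely $j = n/2 - 1$), whose Seiberg-Witten invariant is $(-1)^{n/2-1}\binom{n-2}{n/2-1}$. Multiplying by the overall $\tfrac{1}{2}$ normalization (accounting for the center $\Z_2 \subset SU(2)$) reproduces $(-1)^{n/2+1}\binom{n-2}{n/2-1}\tfrac{1}{2}(G(q^2)/4)^{n/2}$, since $(-1)^{n/2+1} = (-1)^{n/2-1}$. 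A sanity check at $n=4$, where the claimed answer is $-(G(q^2)/4)^2$, confirms this: the middle class $x=0$ has $\mathrm{SW}(0) = -2$, and $-2\cdot \tfrac{1}{2}\cdot (G(q^2)/4)^2 = -(G(q^2)/4)^2$.

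The case $n=2$, which is K3, is simpler: the basic class set is the single class $\{0\}$ with $\mathrm{SW}(0) = 1$, so no cancellation takes place and all three brackets contribute. With $(\chi+\sigma)/8 = 1$, $(-1)^{(\chi+\sigma)/4} = 1$, $2^{1-b_1} = 2$, and $i^{-v^2}$ trivial on the chosen flux, the three terms assemble, after the same overall factor of $\tfrac{1}{2}$, into the stated $\tfrac{1}{8}G(q^2) + \tfrac{1}{4}G(q^{1/2}) + \tfrac{1}{4}G(-q^{1/2})$.

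The main obstacle is the bookkeeping of the first bracket: one must pin down the precise convention for $x'$ (likely $x' = \tfrac{1}{2}(x - K)$ or a close relative, so that different basic classes give distinct mod-$2$ reductions) and for the 't~Hooft flux $v$ in \eqref{VWKahler}, in such a way that $\delta_{v,[x']}$ isolates the central basic class $x = 0$ uniformly for every even $n > 2$. Getting this selection rule and the attendant sign right is where the care is needed; once the conventions are fixed the remaining manipulations reduce to the combinatorial identity $\sum_j (-1)^j \binom{n-2}{j} = 0$ and the standard rewriting in terms of $G(q) = 1/\eta^{24}$.
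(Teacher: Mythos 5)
Your proposal is correct and follows essentially the same route as the paper's proof: substitute $b_1=0$, $2\chi+3\sigma=0$, $(\chi+\sigma)/4=n$, $F\cdot F=0$ and the Fintushel--Stern basic classes $(n-2j)F$ with $\mathrm{SW}=(-1)^{j+1}\binom{n-2}{j-1}$ into \eqref{VWKahler}, evaluate at $v=0$ with the overall $\tfrac12$ for the center of $SU(2)$, let the $\delta_{v,[x']}$ term pick out the central class, and kill the other two brackets for $n>2$ by the binomial identity $\sum_j(-1)^j\binom{n-2}{j}=0$. The convention issue you flag about $x'$ is handled in the paper exactly as you anticipate --- by setting $v=0$ and taking the first bracket to contribute only the $j=n/2$ class --- so there is no substantive difference between the two arguments.
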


\begin{proof}
The elliptic surface $E(n)$ has the following basic topological invariants
\be
b_1=0
\,, \qquad
2 \chi + 3 \sigma = 0
\,, \qquad
\frac{\chi + \sigma}{4} = n
\ee
Moreover, $F \cdot F = 0$ and the basic classes are $(n-2j) F$, with $j = 1, \ldots, n-1$ and the corresponding Seiberg-Witten invariants:
\be
\mathrm{SW}_{E(n)} ({\mathfrak{s}}_j) \; = \; (-1)^{j+1} {n -2 \choose j-1}
\ee
In order to obtain the $G=SU(2)$ invariant of $M_4 = E(n)$, we need to substitute all these into \eqref{VWKahler}, evaluate it at $v=0$ and also multiply by a factor of $\frac{1}{2}$ (associated with the center of $SU(2)$).
For $n = \text{even}$, a straightforward calculation gives
\begin{multline}
\label{ZVWEnlong}
\ZVW (E(n)) = (-1)^{\frac{n}{2} + 1}
{n -2 \choose \frac{n}{2} -1}
\frac{1}{2}
\left( \frac{G(q^2)}{4} \right)^{\frac{n}{2}} + 
\\
+ \sum_{j=1}^{n-1} (-1)^{j+1} {n -2 \choose j-1}
\Big[
\left( \frac{G(q^{1/2})}{4} \right)^{\frac{n}{2}}
+ \left( \frac{G(-q^{1/2})}{4} \right)^{\frac{n}{2}}
\Big]
\end{multline}
By the binomial formula, the second line in this expression is non-zero only for $n=2$ and, therefore, we otain \eqref{ZVWEnshort}.
\end{proof}

This result is not surprising because gluing along $M_3$ is expected \cite{MR4047476} to be governed by $\text{MTC} [M_3]$, which is very non-trivial for $M_3 = T^3$. Indeed, modulo delicate questions related to KK modes (which will be discussed elsewhere), the calculations in section~\ref{sec:eqVerlinde} can be interpreted as the computation of \eqref{KMTCss} for $M_3 = T^3$. Indeed, since $M_4 = T^2 \times \Sigma_g$ can be obtained by gluing basic pieces\footnote{These basic pieces are products of $T^2$ with pairs-of-pants, illustrated in Figure~\ref{fig:pants}.} along 3-tori and the corresponding calculation of Vafa-Witten invariants for $G=SU(2)$ is expressed as a sum \eqref{ZviaSmatrix} where $\lambda$ takes 10 possible values, {\it cf.} \eqref{Selements}, we expect the gluing formula for \eqref{Enfibersum} to be also a sum over the same set of $\lambda$. In comparison, the naive multiplicative gluing formula {\it a la} \eqref{Enproduct} would mean that the sum over $\lambda$ consists of a single term.

Surgery operations that involve cutting and gluing $M_3$ itself, {\it i.e.} surgery formulae for $\HVW (M_3)$, are also interesting. For example, continuing the parallel with the Donaldson-Witten theory, one might expect the standard surgery exact triangles
\be
\ldots \xrightarrow[~~~~]{~~\phantom{i}~~} \HVW (S^3) \xrightarrow[~~~~]{~~\phantom{i}~~} \HVW (S^3_0 (K)) \xrightarrow[~~~~]{~~\phantom{i}~~} \HVW (S^3_{+1} (K)) \xrightarrow[~~~~]{~~\phantom{i}~~} \ldots
\label{HVWS3triangle}
\ee
or, more generally,
\be
\ldots \xrightarrow[~~~~]{~~\phantom{i}~~} \HVW (Y_0 (K))  \xrightarrow[~~~~]{~~i~~} \HVW (Y_{+1} (K)) \xrightarrow[~~~~]{~~j~~} \HVW (Y) \xrightarrow[~~~~]{~~k~~} \HVW (Y_0 (K)) \xrightarrow[~~~~]{~~\phantom{i}~~} \ldots
\label{HVWtriangle}
\ee
where $K$ is a knot in a homology 3-sphere $Y$. Although such surgery exact triangles are ubiquitous in gauge theory --- apart from the original Floer homology, they also exist in many variants of the monopole Floer homology and its close cousin, the Heegaard Floer homology --- it is not clear whether they hold in Vafa-Witten theory. One important difference that already entered our analysis in a number of places has to do with degree shifts.
Indeed, all maps in \eqref{HVWtriangle} and, similarly, in the oppositely-oriented exact sequence associated with the inverse surgery operation,
\be
\ldots \xrightarrow[~~~~]{~~\phantom{i}~~} \HVW (Y_{-1} (K)) \xrightarrow[~~~~]{~~\phantom{i}~~} \HVW (Y_0 (K)) \xrightarrow[~~~~]{~~\phantom{i}~~} \HVW (Y) \xrightarrow[~~~~]{~~\phantom{i}~~} \HVW (Y_{-1} (K)) \xrightarrow[~~~~]{~~\phantom{i}~~} \ldots
\label{HVWopptriangle}
\ee
are induced by oriented cobordisms. For exampe, the map $j$ in \eqref{HVWtriangle} is induced by a cobordism $W$ with $b_1 (W) = b_2^+ (W) = 0$ and $b_2^- (W) = 1$, {\it cf.} \cite{MR1362829}. Given such topological data of a cobordism, the degree shift of the corresponding map can be computed by evaluating the virtual dimension (``ghost number'' anomaly) of a given theory on $W$. However, the balanced property of the Vafa-Witten theory that appeared a number of times earlier makes this quantity vanish for any $W$, so that all maps in \eqref{HVWtriangle} and \eqref{HVWopptriangle} have degree zero.

Another, perhaps even more important feature of the Vafa-Witten theory in regard to the existence of surgery exact triangles directly follows from the computations in this paper. Namely, earlier we saw that $\HVW (S^2 \times S^1)$ is much larger than $\HVW (S^3)$: while the latter is isomorphic to $\CT^+$, the former has GK-dimension 4 ({\it i.e.} looks like $(\CT^+)^{\otimes 4}$). The reason $M_3 = S^3$ and $S^2 \times S^1$ are relevant to the question about surgery exact triangles is that they tell us about all terms in \eqref{HVWS3triangle} when $K=\mathrm{unknot}$: in this csae $S^3_{+1} (K) \cong S^3$ and $S^3_0 (K) \cong S^2 \times S^1$. It is not clear how such a sequence could be exact, suggesting that the standard form of the surgery exact triangles may not hold in the Vafa-Witten theory. One possible scenario is that a spectral sequence as in section~\ref{sec:Qcohomology} can reduce the size of $\HVW (S^2 \times S^1)$ to roughly twice the size of $\HVW (S^3)$, thus providing a more natural definition of $\HVW (M_3)$ in view of \eqref{HVWS3triangle}.

This example, with $K=\mathrm{unknot}$, also illustrates well the origin of the problem: the reason $\HVW (S^2 \times S^1)$ is much larger than $\HVW (S^3)$ has to do with non-compactness of the moduli spaces, as we saw earlier in section~\ref{sec:eqVerlinde} and, therefore, suggests that addressing the non-compactness of the moduli spaces may help with the surgery exact triangles.
It would be interesting to shed more light on this question.

\appendix

\section{Supersymmetry algebra}
\label{sec:appendix}

In addition to scalar (0-form) supercharges \eqref{scalarQ}, the topological Vafa-Witten theory also has vector (1-form) supercharges \cite{Geyer:2001yc}:
\be
\begin{aligned}
\bar Q^a_{\mu} A_{\nu} & = \delta_{\mu \nu} \eta^a + \chi_{\mu \nu}^a \\
\bar Q^a_{\mu} B_{\rho \sigma}^+ & = - \delta_{\mu [ \rho} \psi_{\sigma]}^a - \epsilon_{\mu \nu \rho \sigma} \psi^{\nu a} \\
\bar Q^a_{\mu} \phi^{bc} & = - \tfrac{1}{2} \epsilon^{ab} \psi^c_{\mu} - \tfrac{1}{2} \epsilon^{ac} \psi^b_{\mu} \\
\bar Q^a_{\mu} \eta^b & = D_{\mu} \phi^{ab} + \epsilon^{ab} H_{\mu} \\
\bar Q^a_{\mu} \psi_{\nu}^b & = - \epsilon^{ab} F_{\mu \nu} + \delta_{\mu \nu} \epsilon_{cd} [\phi^{ac} , \phi^{bd}] + \epsilon^{ab} G^+_{\mu \nu} - [B^+_{\mu \nu} , \phi^{ab}] \\
\bar Q^a_{\mu} H_{\nu} & = D_{\mu} \psi^a_{\nu} - \tfrac{1}{2} D_{\nu} \psi^a_{\mu} + \epsilon_{cd} [\phi^{ac} , \chi^d_{\mu \nu} - \delta_{\mu \nu} \eta^d] + [B^+_{\mu \nu} , \eta^a] \\
\bar Q^a_{\mu} \chi^b_{\rho \sigma} & = \delta_{\mu [ \rho} D_{\sigma ]} \phi^{ab} + \epsilon_{\mu \nu \rho \sigma} D^{\nu} \phi^{ab} - \epsilon^{ab} \delta_{\mu [ \rho} H_{\sigma ]} - \epsilon^{ab} \epsilon_{\mu \nu \rho \sigma} H^{\nu} - \epsilon^{ab} D_{\mu} B^+_{\rho \sigma} \\
\bar Q^a_{\mu} G^+_{\rho \sigma} & = D_{\mu} \chi^a_{\rho \sigma} - \delta_{\mu [ \rho} D_{\sigma ]} \eta^a - \epsilon_{\mu \nu \rho \sigma} D^{\nu} \eta^a - \epsilon_{cd} [\phi^{ac} , \delta_{\mu [ \rho} , \psi^d_{\sigma ]} + \epsilon_{\mu \nu \rho \sigma} \psi^{\nu d} ] + \tfrac{1}{2} [\psi^a_{\mu} , B^+_{\rho \sigma} ]
\end{aligned}
\notag
\ee
The algebra of these supercharges, up to gauge transformations, is
\begin{eqnarray}
\{ Q^a , Q^b \} & = & 0 \notag \\
\{ Q^a , \bar Q^b_{\mu} \} & = & - \epsilon^{ab} \partial_{\mu} \\
\{ \bar Q^a_{\mu} , \bar Q^b_{\nu} \} & = & 0 \notag
\end{eqnarray}
For $\mu = 0$, it should be compared with the supersymmetry algebra in quantum mechanics. With two supercharges, the supersymmetry algebra in quantum mechanics looks like $\{ Q, Q^{\dagger} \} = 2H$. With the enhanced supersymmetry, it has the form
\be
\{ Q_i , Q_j^{\dagger} \} = 2 \delta_{ij} H
\,, \qquad
\{ Q_i , Q_j \} = 0
\ee

In the main text we also encountered a closely related 2d $\CN=(2,2)$ supersymmetry algebra:
\be
\{ Q_+ , \bar Q_+ \} = \tfrac{1}{2} (H+P) = H_L
\,, \qquad
\{ Q_- , \bar Q_- \} = \tfrac{1}{2} (H-P) = H_R
\ee
and the topological A-model, that involves linear combinations of the above supercharges, $Q_A = Q_+ + \bar Q_-$ and $\bar Q_A = \bar Q_+ + Q_-$, such that $\{ Q_A , \bar Q_A \} = H$ and $Q_A^2 = 0$.


\newpage
\bibliography{VWrefs}
\bibliographystyle{alpha}

\end{document}